\colorlet{mylinkcolor}{violet}
\colorlet{mycitecolor}{YellowOrange}
\colorlet{myurlcolor}{Aquamarine}
\newtheorem*{rep@theorem}{\rep@title}
\newcommand{\newreptheorem}[2]{%
\newenvironment{rep#1}[1]{%
 \def\rep@title{#2 \ref{##1}}%
 \begin{rep@theorem}}%
 {\end{rep@theorem}}}
\newtheorem{theorem}{Theorem}
\newtheorem{question}[theorem]{Question}
\newtheorem{corollary}[theorem]{Corollary}
\newtheorem{proposition}[theorem]{Proposition}
\newtheorem{claim}[theorem]{Claim}
\newtheorem{lemma}[theorem]{Lemma}
\theoremstyle{remark}
\newcommand{\R}{\mathbb{R}}
\let\le\leqslant
\let\ge\geqslant
\let\leq\leqslant
\let\geq\geqslant
\DeclareMathOperator{\degree}{deg}
\DeclareMathOperator{\ex}{ex}
\def\be{\begin{equation}}
\def\ee{\end{equation}}
\tikzset{->-/.style={decoration={
  markings,
  mark=at position #1 with {\arrow[scale=1.5]{latex}}},postaction={decorate}}}
\newcommand{\definenodes}{%
\tikzstyle{vtx}=[circle,draw,thick,fill=black!10]
\node[vtx] (0) at (0,0) {\tiny $0$};
\node[vtx] (1) at (0,2) {\tiny $1$};
\node[vtx] (2) at (-0.5,1) {\tiny $2$};
\node[vtx] (3) at (-1.25,1) {\tiny $3$};
\node[vtx] (4) at (0.5,1) {\tiny $4$};
\node[vtx] (5) at (1.25,1) {\tiny $5$};%
}
\newcommand{\definenodesleft}{%
\tikzstyle{vtx}=[circle,draw,thick,fill=black!10]
\node[vtx] (0) at (0,0) {\tiny $0$};
\node[vtx] (1) at (0,2) {\tiny $1$};
\node[vtx] (2) at (-0.5,1) {\tiny $2$};
\node[vtx] (3) at (-1.25,1) {\tiny $3$};
}
\newcommand{\definenodesright}{%
\tikzstyle{vtx}=[circle,draw,thick,fill=black!10]
\node[vtx] (0) at (0,0) {\tiny $0$};
\node[vtx] (1) at (0,2) {\tiny $1$};
\node[vtx] (4) at (0.5,1) {\tiny $4$};
\node[vtx] (5) at (1.25,1) {\tiny $5$};%
}
\newcounter{axiomnumber}
\DeclareDocumentCommand\drawdirectedaxiom{ m g g o o o}{%
	\fbox{\begin{tikzpicture}[scale=1,inner sep=1.5pt]
	\definenodes
	\node at (-1.25,2) {\footnotesize A\arabic{axiomnumber}};
	\draw[thick,->-=1.0] #1;
	\IfNoValueF{#2}{%
		\draw[thick,->-=1.0] #2;%
		\IfNoValueF{#3}{%
			\draw[thick,->-=1.0] #3;%
		}%
	}%
	\IfNoValueF{#4}{%
		\draw[thick,dotted] #4;%
		\IfNoValueF{#5}{%
			\draw[thick,dotted] #5;%
			\IfNoValueF{#6}{%
				\draw[thick,dotted] #6;%
			}%
		}%
	}%
	\end{tikzpicture}}%
	\stepcounter{axiomnumber}%
}
\DeclareDocumentCommand\drawcaseleft{ m m g }{%
	\begin{tikzpicture}[scale=.75,inner sep=1.5pt]
	\definenodesleft
	\draw[very thick] #1;
	\draw[very thick] #2;
	\IfNoValueF{#3}{%
		\draw[very thick] #3;%
	}%
	\end{tikzpicture}%
}
\DeclareDocumentCommand\drawcaseright{ m m g }{%
	\begin{tikzpicture}[scale=.75,inner sep=1.5pt]
	\definenodesright
	\draw[very thick] #1;
	\draw[very thick] #2;
	\IfNoValueF{#3}{%
		\draw[very thick] #3;%
	}%
	\end{tikzpicture}%
}
\DeclareDocumentCommand\writeproof{ s o o o g g g }{%
	\tiny
	\begin{minipage}[b]{4.3em}
		\IfBooleanTF{#1}{%
	        \IfNoValueF{#2}{$\Delta(#2)$\\}%
	        \IfNoValueF{#3}{$\Delta(#3)$\\}%
	        \IfNoValueF{#4}{$\Delta(#4)$\\}%
	        \IfNoValueF{#5}{#5\\}%
	        \IfNoValueF{#6}{#6\\}%
	        \IfNoValueF{#7}{#7\\}%
	        (in $T_2$)\\
	   }{%
	        \IfNoValueF{#2}{$\Delta(#2)$\\}%
	        \IfNoValueF{#3}{$\Delta(#3)$\\}%
	        \IfNoValueF{#4}{$\Delta(#4)$\\}%
	        \IfNoValueF{#5}{#5\\}%
	        \IfNoValueF{#6}{#6\\}%
	        \IfNoValueF{#7}{#7}%
	   }%
	\end{minipage}
}
\newcommand{\Lpspace}[2]{\ell_{#1}^{#2}}
\title[The excluded minors for isometric realizability in the plane]{The excluded minors for isometric realizability in the plane}
\let\old@setaddresses\@setaddresses
\def\@setaddresses{\bigskip\bgroup\parindent 0pt\let\scshape\relax\old@setaddresses\egroup}
\author[S.~Fiorini]{Samuel Fiorini}
\author[T.~Huynh]{Tony Huynh}
\address[S.~Fiorini, T.~Huynh]{Mathematics Department \\
  Universit\'e Libre de Bruxelles\\
  Brussels\\
  Belgium}
\email{sfiorini@ulb.ac.be, tony.bourbaki@gmail.com}
\author[G.~Joret]{Gwena\"{e}l Joret}
\address[G.~Joret]{Computer Science Department \\
  Universit\'e Libre de Bruxelles\\
  Brussels\\
  Belgium}
\email{gjoret@ulb.ac.be}
\author[A.~Varvitsiotis]{Antonios Varvitsiotis}
\address[A.~Varvitsiotis]{
School of Physical and Mathematical Sciences, Nanyang Technological University, Singapore \& 
 Centre for Quantum Technologies,
 National University of Singapore \\
 Singapore}
\email{avarvits@gmail.com}
\begin{document}
\begin{abstract}
Let $G$ be a graph and $p \in [1, \infty]$. The parameter  $f_p(G)$  is the least integer $k$ such that for all  $m$ and all vectors $(r_v)_{v \in V(G)} \subseteq \mathbb{R}^m$, there exist vectors $(q_v)_{v \in V(G)} \subseteq \mathbb{R}^k$ satisfying
$$\|r_v-r_w\|_p=\|q_v-q_w\|_p, \ \text{ for all }\ vw\in E(G).$$
It is easy to check that $f_p(G)$ is always finite and that it is minor monotone.  By the graph minor theorem of Robertson and Seymour~\cite{RS04}, there are a finite number of excluded minors for the property $f_p(G) \leq k$.   

  In this paper, we determine the complete set of excluded minors for $f_\infty(G) \leq 2$.  The two excluded minors are the wheel on $5$ vertices and the graph obtained by gluing two copies of $K_4$ along an edge and then deleting that edge.
  We also show that the same two graphs are the complete set of excluded minors for $f_1(G) \leq 2$.  In addition, we give a family of examples that show that $f_\infty$ is unbounded on the class of planar graphs and $f_\infty$ is not bounded as a function of tree-width.
\end{abstract}

\maketitle
\section{Introduction}
Let $X$ be a finite set and $d: X\times X \rightarrow  \mathbb{R}_{\ge0}$. We say that $(X,d)$ is a {\em metric space}
if  $d$  satisfies the following properties: $(i)$ 
 $d(i,j)=d(j,i)$ for all $i,j\in X$, $(ii)$  $ d(i,j)=0$ if and only if  $i=j$, and  $ (iii)$ $d(i,j)\le d(i,k)+d(k,j)$ for all 
 $i,j,k\in X$.  
 For $x \in \mathbb{R}^m$ define $\|x\|_p:=(\sum_{i=1}^m|x_i|^p)^{1/p}$  and $\|x\|_\infty:=\max_{i=1}^m |x_i|$.  Recall that $\|\cdot\|_p$  is a norm for all $p \in [1, \infty]$. Throughout this article we denote by $\ell_p^m$ the metric space  $(\mathbb{R}^m,d_p)$ where $d_p(x,y)=\|x-y\|_p$.
 
A natural way for comparing  two metric spaces  $(X,d)$ and $(X',d')$  is through the use of distance preserving maps from one space to the other. Formally, an  {\em isometric embedding}  of $(X,d)$ into $(X',d')$ is a function $\phi : X\rightarrow X'$ such that $d(x,y)=d'(\phi(x),\phi(y))$ for all $x,y\in X$.   
 
Typically, the requirement that all pairwise distances are preserved exactly is too restrictive to be useful  in practice. To cope with this, a  successful theory of embeddings with distortion has been developed, where   the requirement that distances are preserved  exactly is relaxed to the requirement that no distance shrinks or stretches excessively. In this direction, the celebrated theorem of Bourgain~\cite{Bourgain85}
asserts that every $n$-point metric space can be embedded  into an $\ell_p^{O(\log^2 n)}$ space with $O(\log n)$ distortion. Moreover, this is best possible up to a constant factor.  

Another popular approach is to only require a \emph{subset} of the distances to be preserved exactly.  This viewpoint is very graph theoretical, and is the approach that we take in this paper. 

All graphs in this paper are finite and do not contain loops or parallel edges.  A graph $H$ is a \emph{minor} of a graph $G$, if $H$ can be obtained from a subgraph of $G$ by contracting some edges.  
When taking minors, we always suppress parallel edges and loops.

Let $G$ be a graph and $p \in [1, \infty]$. We define  $f_p(G)$ to  be the least integer $k$ such that for all $m$ and all vectors $(r_v)_{v \in V(G)} \subseteq \mathbb{R}^m$, there exist vectors $(q_v)_{v \in V(G)} \subseteq \mathbb{R}^k$ satisfying
$$\|r_v-r_w\|_p=\|q_v-q_w\|_p, \ \text{ for all }\ vw\in E(G).$$  

It is not obvious that this parameter is always finite, but from the conic version of Carath\'eodory's Theorem, it follows  that $f_p(G) \le \binom{n}{2} $ for all $p \in [1, \infty]$ and all $n$-vertex graphs $G$ (see \cite{B90} and \cite[Proposition 11.2.3]{DL97}). 
For $p=2$, Barvinok~\cite{Barvinok95} showed the better bound $f_2(G) \leq (\sqrt{8m+1} - 1)/2$ for graphs $G$ with $m$ edges. 

Let $K_n$ denote the complete graph on $n$ vertices. The study of $f_p(K_n)$ for varying values of $p\in [1,\infty]$ is a fundamental problem in the theory of metric embeddings.  For the case $p=\infty$, Holsztynski \cite{H78} (and subsequently Witsenhausen \cite{W86}) showed that 
$$\left\lfloor \frac{2n}{3} \right\rfloor \leq f_\infty (K_n) \leq n-2, \text{ for } n\ge 4.$$  
Furthermore, Witsenhausen \cite{W86} showed that 
$ f_1 (K_n) \geq n-2$  for $n\ge 3$, 
which was later improved to 
$$f_1 (K_n) \geq \binom{n-2}{2}, \text{ for } n\ge 3,$$ by Ball \cite{B90}. 
 Lastly, Ball \cite{B90} also showed that 
 $$f_p(K_n)\ge \binom{n-1}{2},  \text{ for all } 1<p<2 \text{ and } n\ge 3 $$ and that there is a constant $c$ such that 
 $$f_\infty(K_n) \geq n-cn^{3/4}, \text{ for all $n$.}$$ 

The lower bound of  $n-cn^{3/4}$  uses the \emph{biclique covering number}, which is the minimum number of complete bipartite subgraphs needed to cover the edges of a graph. Rödl and Ruciński \cite{RR97} have since shown that there is a constant $c$ such that for every $n$ there exists an $n$-vertex graph that cannot be covered with $n-c\log n$ complete bipartite subgraphs.  
This implies that there is a constant $c$ such that $$f_\infty(K_n) \geq n-c \log n, \text{ for all $n$.}$$

The parameters $f_p(G)$ are also widely studied in rigidity theory.  We refer the interested reader to Kitson~\cite{Kitson15} and Sitharam and Gao~\cite{SG10} and the references therein. 

It is easy to show that for all $p \in [1, \infty]$, the parameter $f_p(G)$ is minor monotone. By the graph minor theorem of Robertson and Seymour~\cite{RS04}, there are a finite number of minor-minimal graphs $G$ with 
$f_p(G) > k$. We call these graphs the \emph{excluded minors} for $f_p(G) \leq k$.

The excluded minors for $f_2(G) \leq 1$, $f_2(G) \leq 2$, and $f_2(G) \leq 3$ were determined by Belk and Connelly~\cite{Belk,BC07}. 

\begin{theorem}[\cite{Belk,BC07}] \label{belkconnelly}
For every graph $G$,
\begin{itemize}
\item[(i)] $f_2(G) \le 1$ iff $G$ has no $K_3$ minor; 
\item[(ii)] $f_2(G)\le 2$ iff $G$ has no $K_4$ minor;
\item[(iii)] $f_2(G)\le 3$ iff $G$ has no $K_5$ minor and no $K_{2,2,2}$ minor. 
\end{itemize}
\end{theorem}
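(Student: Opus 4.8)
The plan is to leverage that $f_2$ is minor monotone, which reduces each of (i)--(iii) to two halves: a \emph{lower bound} showing the named graph(s) have $f_2$ above the threshold, and an \emph{upper bound} showing every graph with no such minor stays at or below it. For the lower bounds it suffices to exhibit one bad configuration. For complete graphs this is classical: $K_n$ is realized by the vertices of a regular simplex in $\R^{n-1}$, and $n$ pairwise equidistant points cannot fit in $\R^{n-2}$, so $f_2(K_3)>1$, $f_2(K_4)>2$ and $f_2(K_5)>3$ (indeed $f_2(K_n)=n-1$, with the upper bound since $n$ points span an affine subspace of dimension at most $n-1$). The graph $K_{2,2,2}$ is the one case requiring a genuine idea --- equal edge lengths just give the regular octahedron, which sits in $\R^3$ --- so one must produce a configuration of its six vertices in $\R^4$ with unequal edge lengths for which, no matter what lengths one assigns to the three non-edges, the resulting $6\times 6$ squared-distance matrix is never realizable in $\R^3$ (equivalently, its centered Gram matrix always has rank at least $4$). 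Verifying that example is the only non-routine step on the lower-bound side.

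The upper bounds all run through one \emph{gluing lemma}: if $G$ is obtained from $G_1$ and $G_2$ by identifying them along a common clique $K$ (and possibly deleting some edges of $K$) and $f_2(G_1)\le d$, $f_2(G_2)\le d$, then $f_2(G)\le d$. The proof is short. Given any configuration of $G$, realize $G_1$ and $G_2$ separately in $\R^d$; since $K$ is a clique, the two images of $K$ realize exactly the same distance matrix, and any two point sets in $\R^d$ with equal pairwise distances are congruent (recenter both, so their Gram matrices agree, and conclude they differ by an orthogonal transformation of $\R^d$). Applying that isometry to the realization of $G_2$ makes the two agree on $K$, and gluing produces a realization of $G$ in $\R^d$; deletions of edges of $K$ only help because $f_2$ is subgraph monotone. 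Since a graph of treewidth at most $k$ is a subgraph of an iterated clique-sum of copies of $K_{k+1}$, the lemma yields $f_2(G)\le k$ for every graph $G$ of treewidth at most $k$, using $f_2(K_2)=1$, $f_2(K_3)=2$, $f_2(K_4)=3$.

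This already disposes of (i) and (ii), since the $K_3$-minor-free graphs are precisely the forests (treewidth at most $1$) and the $K_4$-minor-free graphs are precisely those of treewidth at most $2$. The substance of the theorem is (iii), where the treewidth shortcut no longer suffices: the $\{K_5,K_{2,2,2}\}$-minor-free class properly contains the class of graphs of treewidth at most $3$, as it includes $V_8$ and the pentagonal prism $C_5\times K_2$, both of treewidth $4$. Here the plan is to build on Wagner's description of $K_5$-minor-free graphs as iterated $3$-clique-sums of planar graphs and copies of $V_8$, upgrading it to a structure theorem that presents every $\{K_5,K_{2,2,2}\}$-minor-free graph as an iterated clique-sum of graphs drawn from a short explicit list of small $3$-realizable ``prime'' graphs --- the hard part of which is the planar case, namely showing that planar graphs with no octahedron minor decompose in this controlled way --- and then to check that the few prime graphs on the list (notably $V_8$ and $C_5\times K_2$) are $3$-realizable by direct geometric constructions, after which the gluing lemma assembles everything. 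I expect the real obstacles to be precisely these three: the explicit $\R^4$-framework certifying $f_2(K_{2,2,2})>3$, the structure theorem (above all its planar ingredient), and the bespoke realizability proofs for the exceptional prime graphs; everything else is formal once the gluing lemma is in place.
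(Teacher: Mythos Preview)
The paper does not prove this theorem at all: it is stated with the citation \cite{Belk,BC07} and used only as background, so there is no ``paper's own proof'' to compare against. Your outline is in fact a faithful sketch of the Belk--Connelly argument in those references: minor monotonicity plus $f_2(K_n)=n-1$ for the easy lower bounds, an explicit bad $K_{2,2,2}$ framework for the remaining one, the clique-sum gluing lemma (which works in $\ell_2$ precisely because congruent point sets are related by a global isometry), the treewidth shortcut for (i) and (ii), and for (iii) a structure theorem for $\{K_5,K_{2,2,2}\}$-minor-free graphs together with direct $3$-realizability checks for the finitely many exceptional pieces. You have correctly identified where the real work lies; just be aware that in this paper the result is quoted, not reproved.
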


In this article we mainly focus on the case $p=\infty$.
The $\ell_\infty$-spaces are particularly interesting due to their ``universal'' nature in terms of isometric embeddings, as illustrated by the following theorem of Fr\'echet.

\begin{theorem}[\cite{F10}] \label{Frechet}Every $n$-point metric space can be isometrically embedded in~$\Lpspace{\infty}{n-1}$.
\end{theorem}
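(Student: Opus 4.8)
The plan is to exhibit an explicit isometric embedding, a small modification of the classical Kuratowski embedding. Write $X = \{x_1, \dots, x_n\}$ and single out $x_n$ as a basepoint. I would define $\phi \colon X \to \R^{n-1}$ by
$$\phi(x)_i := d(x, x_i) - d(x_n, x_i) \qquad (1 \le i \le n-1),$$
and claim that $\|\phi(x) - \phi(y)\|_\infty = d(x,y)$ for all $x,y \in X$, which is exactly the assertion that $\phi$ embeds $(X,d)$ isometrically into $\Lpspace{\infty}{n-1}$.

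First I would check the upper bound: for each $i$ the constant term $d(x_n,x_i)$ cancels in $\phi(x)_i - \phi(y)_i$, and the triangle inequality gives $|d(x,x_i) - d(y,x_i)| \le d(x,y)$; hence $\|\phi(x)-\phi(y)\|_\infty \le d(x,y)$. Then, for the matching lower bound, given distinct $x,y$, at least one of them, say $x = x_j$, lies among $x_1,\dots,x_{n-1}$ (both cannot equal $x_n$); reading off the $j$-th coordinate yields $|\phi(x)_j - \phi(y)_j| = |d(x_j,x_j) - d(y,x_j)| = d(x,y)$, so $\|\phi(x)-\phi(y)\|_\infty \ge d(x,y)$. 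Combining the two estimates gives the desired equality.

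There is essentially no real obstacle here: the only point requiring care is the passage from the obvious $n$-dimensional embedding $x \mapsto (d(x,x_i))_{i=1}^{n}$ into $\Lpspace{\infty}{n}$ to an $(n-1)$-dimensional one. This is handled by translating by the fixed vector $(d(x_n,x_i))_{i=1}^{n}$ — a translation changes no $\ell_\infty$ distance — so that the coordinate indexed by $x_n$ becomes identically zero and may be discarded, together with the observation that in the lower bound the witnessing coordinate can always be chosen among the surviving $n-1$ indices.
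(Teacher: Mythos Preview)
Your proof is correct; this is precisely the standard Fr\'echet--Kuratowski embedding, and your handling of the dimension drop from $n$ to $n-1$ by translating and discarding the coordinate indexed by the basepoint is the usual manoeuvre. The paper itself does not prove this theorem at all---it is simply quoted as a classical result with a citation---so there is nothing to compare against.
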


Theorem \ref{Frechet} allows us to rephrase the condition $f_\infty(G) \leq k$ as follows. Let $G$ be a graph and $d: E(G)\rightarrow \mathbb{R}_{\ge 0}$. The {\em length} of a path $P$ in $G$ is defined as 
 $\sum_{e\in E(P) }d_e$. Throughout this work we call  $d: E(G)\rightarrow \mathbb{R}_{\ge 0}$ a {\em distance function on $G$} if for all edges $xy\in E(G)$, 
 every path from $x$ to $y$ has length at least $d_{xy}$ 
 (in other words, the path consisting of the edge $xy$ is a shortest path). 
 We remark that $d_{xy} = 0$ is allowed in this definition, 
 and that $d$ defines a corresponding metric space $X$ on at most $|V(G)|$ points as follows. First contract all edges $xy$ with $d_{xy} = 0$, 
 and then consider the shortest path lengths between pairs of vertices. 
 Hence, by Theorem \ref{Frechet}, $f_\infty(G) \leq k$ if and only if for
 all distance functions $d$ on $G$, there exist vectors $(q_v)_{v \in V(G)} \subseteq \mathbb{R}^k$ satisfying
$$\|q_x-q_y\|_\infty=d_{xy}, \ \text{ for all }\ xy\in E(G).$$

Note that for all $p,q \in [1, \infty]$, $\ell_p^1=\ell_q^1$. Thus, by Theorem \ref{belkconnelly}, 
$f_\infty(G) \leq 1$ if and only if $G$ has no $K_3$ minor.  In this paper we determine the complete
set of excluded minors for $f_\infty(G)\le 2$. Let $W_4$ denote the wheel on $5$ vertices and $K_4 +_e K_4$ be the graph obtained by gluing two copies of $K_4$ along an edge $e$ and then deleting $e$, see Figure~\ref{fig:excluded_minors}.  Using techniques from rigidity matroids, Sitharam and Willoughby~\cite{SW15} determined $f_\infty(G)$ for all graphs $G$ with at most 5 vertices, except for $W_4$. They conjectured that $W_4$ is an excluded minor for $f_\infty (G) \leq 2$, and that $W_4$ is the \emph{only} excluded minor for $f_\infty(G) \leq 2$. We verify their first conjecture, but disprove the second by showing that $K_4 +_e K_4$ is also an excluded minor for $f_\infty(G) \leq 2$.

The following is our main result.

\begin{theorem}[Main Theorem]
\label{main}
The excluded minors for $f_\infty (G) \leq 2$ are $W_4$  and $K_4 +_e K_4$. 
\end{theorem}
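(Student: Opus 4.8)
The plan is to establish two implications: (1) $f_\infty(W_4) > 2$ and $f_\infty(K_4 +_e K_4) > 2$; and (2) if $G$ has neither $W_4$ nor $K_4 +_e K_4$ as a minor, then $f_\infty(G) \le 2$. Together with the elementary fact that $W_4$ and $K_4 +_e K_4$ are incomparable in the minor order (one has $5$ vertices and the other $6$, and neither is a minor of the other), these give the theorem. Indeed, (2) shows that every proper minor $H$ of $W_4$ or of $K_4 +_e K_4$ — which, by incomparability, has neither as a minor — satisfies $f_\infty(H) \le 2$; combined with (1) this says both graphs are excluded minors. Conversely, if $M$ is any excluded minor, then by (2) it has a $W_4$ or $K_4 +_e K_4$ minor, and minimality of $M$ forces $M$ to equal that graph.

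For (1) I would use the reformulation coming from Theorem~\ref{Frechet}: a realization of a distance function $d$ on a graph $H$ in $\mathbb{R}^2$ with the $\ell_\infty$ norm is exactly a pair of maps $f_1, f_2 \colon V(H) \to \mathbb{R}$ — the two coordinates — with $|f_i(x) - f_i(y)| \le d_{xy}$ for every edge $xy$ and each $i$, and $|f_i(x) - f_i(y)| = d_{xy}$ for at least one $i$, for every edge. It therefore suffices to exhibit one distance function $d$ on $W_4$ admitting no such pair, and one on $K_4 +_e K_4$. I would write down an explicit $d$ in each case and argue by a short case analysis on which edges are "tight" for $f_1$ versus for $f_2$: each edge lies in at least one of the two tight sets, while inside any triangle at most two edges can be tight for a single $f_i$ unless that triangle is degenerate (one length equals the sum of the other two); so choosing $d$ with all triangles strictly non-degenerate and with the right numerical relations forces the two tight sets into an impossible configuration.

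For (2) I would argue by induction on $|V(G)| + |E(G)|$, using three ingredients. First, a gluing lemma for clique-sums over cliques of size at most $2$: the size-$0$ and size-$1$ cases are trivial (translation), while the edge case is delicate because the isometry group of $\ell_\infty^2$ is only $D_4 \ltimes \mathbb{R}^2$, so one cannot freely align two independently chosen realizations along the shared edge — I would handle this by proving a strengthened "boundary-pinned" form of realizability for the pieces that arise, carrying through the induction the flexibility needed to match up the shared edge. Second, reductions that delete a vertex $v$ of degree at most $2$: after adding, if necessary, a virtual edge between the (at most two) neighbours of $v$ of length chosen inside the interval dictated by the triangle inequalities of $d$, one obtains a smaller minor of $G$, and $v$ can be replaced afterwards, since in $\ell_\infty^2$ two spheres of radii $\delta_1, \delta_2$ whose centres are at distance $\delta \in [\,|\delta_1 - \delta_2|,\, \delta_1 + \delta_2\,]$ have intersecting boundaries, exactly as in the Euclidean plane. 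Third — the combinatorial core — a structure theorem for the $3$-connected graphs with no $W_4$ and no $K_4 +_e K_4$ minor: since $K_5$, $K_{3,3}$, $K_{2,2,2}$, the triangular prism, and every larger wheel already contain $W_4$ as a minor, I expect this class to be very restricted, essentially built around a single $K_4$ with the rest series-parallel and attached along edges, the role of $K_4 +_e K_4$ being precisely to forbid combining two $K_4$-minus-an-edge pieces across a non-edge $2$-separation. For each graph produced by the structure theorem I would then exhibit an explicit $\ell_\infty^2$ realization of an arbitrary distance function, using that these graphs are "nearly outerplanar": one coordinate lays the graph out along a path or cycle while the other absorbs the remaining bounded structure.

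The step I expect to be the main obstacle is the structure theorem together with the explicit realizations of its building blocks; this is where the precise pair $\{W_4, K_4 +_e K_4\}$ — and not merely $W_4$ — is essential. A secondary but genuine difficulty is the edge clique-sum step, where the smallness of the isometry group of $\ell_\infty^2$ prevents gluing black-box realizations and forces extra flexibility data to be tracked through the induction.
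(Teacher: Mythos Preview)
Your plan for (1) matches the paper: explicit distance functions on $W_4$ and $K_4 +_e K_4$ are exhibited and a case analysis on which edges are tight for each coordinate rules out any $\ell_\infty^2$ realization (Lemmas~\ref{fw4} and~\ref{lem:k4+k4}).

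For (2), the paper's route is different and considerably shorter than what you propose, and your main proposed ingredient --- the ``boundary-pinned'' $2$-sum gluing --- is both under-specified and unnecessary. The paper argues contrapositively: a minor-minimal $G$ with $f_\infty(G)\ge 3$ is $2$-connected (closure under $0$- and $1$-sums) and has minimum degree at least $3$ (your ingredient~2, the degree-$2$ suppression Lemmas~\ref{degree2} and~\ref{subdivision}). If $G$ is $3$-connected then, since it has no $W_4$ minor, Tutte's wheel theorem forces $G=K_4$ (your ingredient~3, which here is the one-line Theorem~\ref{nowheel}), a contradiction. Otherwise $G$ has a $2$-separation $\{a,b\}$, and the key step is a \emph{rooted-minor} lemma (Lemma~\ref{k4-e}): each side, being $2$-connected with all non-cut vertices of degree $\ge 3$, contains a $K_4-e$ minor with $a,b$ sent to the ends of $e$. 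Combining the two rooted minors yields a $K_4 +_e K_4$ minor in $G$, again a contradiction. No realizations are ever glued; the only realizability facts used beyond $f_\infty(K_4)=2$ are the degree-$2$ lemmas, which only glue a $K_3$, not an arbitrary piece.

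By contrast, your boundary-pinned gluing faces the obstacle that $f_\infty\le 2$ is \emph{not} closed under $2$-sums --- $K_4 +_e K_4$ itself is the witness --- so whatever pinning data you carry must somehow encode the absence of a $K_4+_eK_4$ minor, and you do not say what that data is or why it suffices. The paper's observation is that you never need this: once minimum degree is at least $3$, any $2$-cut already produces the forbidden minor via the rooted-$K_4$ argument, so the degree-$2$ reduction you already have is enough to collapse every graph in the class down to $K_4$. Your ``nearly outerplanar'' explicit realizations and the general $2$-sum lemma can be dropped entirely; what you are missing is Lemma~\ref{k4-e}.
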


The proof of Theorem~\ref{main} is given in Section \ref{sec:main}. Note that unlike the $p=2$ case, given points $x,y, x',y' \in \mathbb{R}^m$ with $\|x-y\|_\infty=\|x'-y'\|_\infty$ there does not necessarily exist an isometry of $\Lpspace{\infty}{m}$ which maps $x$ to $x'$ and $y$ to $y'$.
For example, take $x=x'=(0,0)$ and $y=(0,1), y'=(1,1)$ in $\Lpspace{\infty}{2}$. Indeed, the isometries of $\ell_\infty^m$ correspond to signed permutation matrices. Therefore, our proof technique for the  $p=\infty$ case is  quite different from the  $p=2$ case.  For example, we will show that the property $f_\infty(G) \leq 2$ is not closed under taking $2$-sums.   

We also prove the following result, which follows from Theorem~\ref{main} with a little extra work. 

\begin{corollary} \label{cor:main}
The excluded minors for $f_1 (G) \leq 2$ are $W_4$  and $K_4 +_e K_4$. 
\end{corollary}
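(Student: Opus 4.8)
The plan is to deduce the $p=1$ result from the $p=\infty$ result (Theorem~\ref{main}) by exploiting the tight relationship between the $\ell_1$ and $\ell_\infty$ norms in low dimensions. First I would recall that $\ell_1^2$ and $\ell_\infty^2$ are isometric as normed spaces: the linear map $(x_1,x_2) \mapsto (x_1+x_2, x_1-x_2)$ carries the unit ball of $\ell_\infty^2$ (a square) onto a scaled copy of the unit ball of $\ell_1^2$ (a rotated square). Consequently, for any graph $G$, a configuration of vectors in $\R^2$ realizing a given set of edge-distances in the $\ell_\infty$ metric can be transformed into one realizing the \emph{same} distances (up to the global scaling factor, which is harmless since we may rescale the target distances) in the $\ell_1$ metric, and vice versa. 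This should give $f_1(G) \le 2 \iff f_\infty(G) \le 2$ for \emph{all} graphs $G$, provided we are careful about the quantifier ``for all $m$ and all $(r_v) \subseteq \R^m$'': the definition of $f_p$ starts from source vectors in arbitrary dimension $m$, not from an abstract distance function.

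To handle the source side cleanly, I would invoke Theorem~\ref{Frechet} together with the reformulation given in the excerpt: for $p=\infty$, the condition $f_\infty(G)\le 2$ is equivalent to requiring that every \emph{distance function} $d$ on $G$ (in the paper's sense: each edge $xy$ is a shortest $x$–$y$ path) be realizable by vectors in $\R^2$ under $\|\cdot\|_\infty$. The key observation is that the analogous reformulation holds for $p=1$ as well, since $\ell_1^m$ also embeds isometrically into some $\ell_\infty^N$ (indeed $\ell_1^m \hookrightarrow \ell_\infty^{2^m}$ via the signs of coordinates; or more simply, any finite metric embeds in $\ell_\infty$ by Fréchet, and $\ell_1$-realizability in dimension $k$ only constrains the target). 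Actually the cleanest route: the set of edge-distance vectors $(\|r_v - r_w\|_1)_{vw \in E(G)}$ achievable by configurations $(r_v)\subseteq \R^m$ over all $m$ is exactly the set of $(d_{xy})$ induced by cut metrics summed up, which is contained in the set of all distance functions on $G$; conversely every distance function on $G$ arises this way (embed the associated metric space into some $\ell_1^N$ — finite metrics that are path metrics of this type do embed, or simply note the realization in $\ell_\infty$ transfers). So both $f_1(G)\le 2$ and $f_\infty(G)\le 2$ are equivalent to: every distance function on $G$ is realizable in $\R^2$ under the respective norm. Since $\ell_1^2 \cong \ell_\infty^2$ isometrically, these two conditions coincide, and hence $f_1(G) \le 2 \iff f_\infty(G)\le 2$ for every graph $G$. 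The excluded minors are therefore the same, namely $W_4$ and $K_4 +_e K_4$.

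The main obstacle I anticipate is the ``little extra work'' alluded to: verifying that the source-vector quantifier in the definition of $f_1$ does not secretly enlarge the class of distance patterns one must realize compared to the abstract-distance-function formulation. Concretely, I must check that every distance function $d$ on $G$ (allowing $d_{xy}=0$) is induced by some $(r_v)\subseteq \R^m$ under $\|\cdot\|_1$ — equivalently, that the metric space $X$ on $\le |V(G)|$ points obtained by contracting zero-length edges and taking shortest-path lengths embeds isometrically into some $\ell_1^N$. This is \emph{not} true for arbitrary finite metrics, but here $X$ carries extra structure: it is the shortest-path metric of a weighted graph where original edges are geodesics. I would argue this directly — e.g., realize each vertex $v$ by the vector $(d_{v,u})_{u \in V(G)}$ or a suitable truncation, or appeal to the fact that $\ell_\infty^{n-1}$ realizations (guaranteed by Fréchet) convert to $\ell_1$ realizations coordinatewise after the $2$-dimensional isometry is applied blockwise; but in dimension $>2$ the $\ell_1$–$\ell_\infty$ isometry fails, so the conversion must be confined to the $k=2$ target, which is exactly where we need it. Once this bookkeeping is done, the rest is immediate from Theorem~\ref{main}.
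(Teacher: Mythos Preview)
Your identification of the isometry $\ell_1^2 \cong \ell_\infty^2$ is exactly right, and you have also correctly located the real difficulty: to transfer the implication $f_1(G)\le 2 \Rightarrow f_\infty(G)\le 2$, you need every distance function $d$ on $G$ to arise as the $\ell_1$-edge-distances of some configuration $(r_v)\subseteq\R^m$. Unfortunately, your attempt to discharge this obstacle does not work. The ``extra structure'' you invoke---that $X$ is the shortest-path metric of a weighted graph whose edges are geodesics---is vacuous: for $G=K_n$ the distance-function condition is nothing more than the triangle inequality, so \emph{every} finite metric on $n$ points is a distance function on $K_n$. In particular there are distance functions on $K_5$ that violate the pentagonal inequality and hence are not $\ell_1$-embeddable in any dimension. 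Your suggested fixes (the Fr\'echet map $v\mapsto (d_{v,u})_u$, which lands in $\ell_\infty$ not $\ell_1$, or a blockwise $2$-dimensional rotation, which you yourself note fails for $m>2$) do not repair this.

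The paper closes the gap by a different, two-pronged argument. For $K_5$-minor-free graphs it appeals to Seymour's linear description of the cut cone, which guarantees that \emph{every} distance function on such a graph is $\ell_1$-embeddable in some $\ell_1^m$; combined with the $\ell_1^2\cong\ell_\infty^2$ isometry this gives the full equivalence $f_1(G)\le 2 \iff f_\infty(G)\le 2$ on that class. For graphs containing a $K_5$ minor, one has $f_1(G)\ge f_1(K_5)\ge 3$ by Witsenhausen's bound, so $f_1(G)\le 2$ simply never occurs there, and the equivalence holds trivially. This is the ``little extra work'' you were missing.
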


Robertson and Seymour~\cite{RS95} proved that testing for a fixed minor can be done in cubic time.  Therefore, our results give an explicit cubic-time algorithm to test if $f_1(G) \leq 2$ (equivalently $f_\infty(G) \leq 2)$.  We simply have to test if our input graph contains a $W_4$ minor or a $K_4 +_e K_4$ minor. 

In a previous version of this paper, we asked whether $f_\infty$ is bounded on the class of planar graphs.  We also asked whether $f_\infty$ is bounded as a function of tree-width. We now have found an example that shows that the answer to both of these questions is negative.

\begin{theorem} \label{thm:example}
For every $k$ there exists a planar graph $G$ with tree-width $3$ such that $f_{\infty}(G) \geqslant k$.
\end{theorem}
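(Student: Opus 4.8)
The goal is to exhibit, for every $k$, a planar graph $G$ of tree-width $3$ with $f_\infty(G) \geqslant k$. The natural approach is to build a graph that forces a lower bound through the path-length reformulation supplied by Theorem~\ref{Frechet}: it suffices to produce a distance function $d$ on $G$ admitting no realization $(q_v)_{v\in V(G)}\subseteq \R^{k-1}$ with $\|q_x-q_y\|_\infty = d_{xy}$ for all $xy\in E(G)$. So the plan is (i) to define a family of planar, tree-width-$3$ graphs $G_k$ — I would try iterated $2$-sums or $3$-sums of a small gadget (e.g.\ repeatedly gluing copies of $K_4$ along edges or triangles, or stacking prisms/triangulated strips), which keeps planarity and tree-width $3$ by construction; (ii) to equip $G_k$ with a carefully chosen distance function; and (iii) to show that any $\ell_\infty$-realization of this distance function must "spend'' a new coordinate at each stage of the construction, so the dimension grows at least linearly (or at least unboundedly) in $k$.

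The heart of the argument is step (iii), and the key structural tool is the observation already emphasized in the paper: the isometry group of $\ell_\infty^m$ consists only of signed permutations of coordinates (composed with translations), so — unlike the Euclidean case — there is very little freedom to "reuse'' coordinates after fixing the images of a few vertices. Concretely, I would set up the gadget so that a realized copy of it pins down the $\ell_\infty$-distances between two (or three) "interface'' vertices in a way that is only achievable if one particular coordinate direction is dominant for that copy; then, gluing many copies along their interfaces along a path-like or tree-like pattern, I would argue that two copies far apart in the gluing pattern cannot share a dominant coordinate without violating some triangle-type constraint among the interface vertices. An inductive/counting argument on the number of coordinates then yields $f_\infty(G_k)\ge k$. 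A clean way to package this is to show that a specific auxiliary graph (a "conflict graph'' on the copies, where two copies are adjacent if their dominant coordinates must differ) has large chromatic number or a large clique, forcing many coordinates.

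I would organize the write-up as: first define $G_k$ explicitly and verify planarity and $\mathrm{tw}(G_k)=3$ (routine, via the $2$-/$3$-sum decomposition and the fact that $K_4$ has tree-width $3$ and $2$-/$3$-sums do not increase tree-width beyond the max of the parts); second, define the distance function $d_k$ on $G_k$; third, prove the main lemma that in any $\ell_\infty^{k-1}$-realization the images of the interface vertices are forced into a rigid configuration per copy; fourth, deduce the dimension lower bound from the conflict-graph/counting argument. The main obstacle I anticipate is step three: pinning down exactly which distance function makes a single gadget "rigid enough'' in $\ell_\infty$ that its realization forces a dominant coordinate, while still being a legitimate distance function (every edge $xy$ must remain a shortest $x$–$y$ path) and still being consistent globally across all the glued copies. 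Getting the numerical values right so that the local rigidity propagates — and so that the conflicts between far-apart copies are genuinely unavoidable — is where the real work lies; everything else is bookkeeping on top of the path-length reformulation and the signed-permutation description of $\ell_\infty$ isometries.
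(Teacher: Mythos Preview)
Your high-level plan---glue copies of $K_4$ along edges in a tree pattern, equip the result with a carefully tuned distance function, and argue that many coordinates are forced---is exactly the shape of the paper's construction. The paper's graph $T\circ K_4$ replaces each vertex $v$ of a tree $T$ by a pair $v^+,v^-$ and each tree-edge $vw$ by the $4$-clique on $\{v^+,v^-,w^+,w^-\}$; this is precisely an iterated $2$-sum of $K_4$'s, and planarity and tree-width $3$ follow for the reasons you give.

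Where you diverge is in the mechanism for the lower bound, and this is where your proposal has a real gap. You plan to lean on the isometry group of $\ell_\infty^m$ and a notion of ``dominant coordinate per copy''. The isometry group is a red herring here: no global rigidity enters the argument. And a single $K_4$ already needs two coordinates, so a copy does not have one dominant coordinate; the right granularity is per \emph{edge}, not per copy. The paper instead uses the potentials/feasible-sets reformulation (Lemma~\ref{lem:feasible}): a realization in $\ell_\infty^m$ is the same as a covering of $E(G)$ by $m$ feasible sets. It then singles out one distinguished edge per \emph{vertex} of $T$ (namely $v^+v^-$, assigned length $1$), sets the remaining lengths to $2^{-i}$ and $1-2^{-i}$ on the $i$th $K_4$, and shows by exhibiting explicit short paths that no feasible set can contain two of the distinguished edges. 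That immediately gives $f_\infty(T\circ K_4)\ge |V(T)|$. The geometric decay $2^{-i}$ is what makes the short-path witnesses work across arbitrarily long chains of copies, and is the concrete answer to the obstacle you correctly flagged but did not resolve. If you rewrite your ``conflict graph'' in these terms---vertices are the edges $v^+v^-$, and you want it to be a clique in the incompatibility relation---you recover the paper's argument; as stated, your plan does not yet contain the idea that makes it go through.
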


\subsubsection*{Paper Organization.} 
In Section \ref{sec:potentials} we present a few equivalent ways to think about $f_\infty(G)$ and prove some upper and lower bounds.  In Section~\ref{sec:k7}, we show $f_\infty(K_7)=5$. In Section~\ref{sec:bounds} we show that we can suppress degree-$2$ vertices when computing $f_\infty(G)$. 
In Section~\ref{sec:twographs} we show that $W_4$ and $K_4 +_e K_4$ are 
excluded minors for $f_{\infty}(G) \leq 2$.  In Section \ref{sec:main} we show that $W_4$ and $K_4 +_e K_4$ are the \emph{only} excluded minors for $f_{\infty}(G) \leq 2$, and explain how to deduce Corollary~\ref{cor:main} from the main theorem. We conclude the paper in Section \ref{sec:openproblems} by proving Theorem~\ref{thm:example} and discussing some open problems.

\begin{figure}
\centering
\begin{tikzpicture}[scale=1.25,inner sep=2pt]
\tikzstyle{vtx}=[circle,draw,thick,fill=black!10]
\node[vtx] (0) at (0,0) {};
\node[vtx] (1) at (0,2) {};
\node[vtx] (2) at (-0.5,1) {};
\node[vtx] (3) at (-1.25,1) {};
\node[vtx] (4) at (0.5,1) {};
\node[vtx] (5) at (1.25,1) {};
\node at (1,0) {$K_4 +_e K_4$};
\node[vtx] (a) at (-3.125,0.125) {};
\node[vtx] (b) at (-4.825,0.125) {};
\node[vtx] (c) at (-4.825,1.825) {};
\node[vtx] (d) at (-3.125,1.825) {};
\node[vtx] (e) at (-4,1) {};
\node at (-2.675,0) {$W_4$};
\draw[thick] (0)--(2) (1)--(2) (0)--(3) (1)--(3) (2)--(3) (0)--(4) (1)--(4) (0)--(5) (1)--(5) (4)--(5) (a)--(b)--(c)--(d)--(a) (e)--(a) (e)--(b) (e)--(c) (e)--(d);
\end{tikzpicture}
\caption{The excluded minors for $f_{\infty}(G) \leq 2$.}
\label{fig:excluded_minors}
\end{figure}
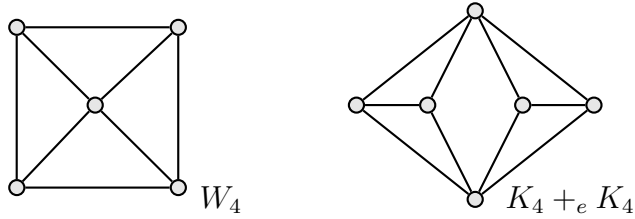

\section{Potentials and Implicit Realizations} \label{sec:potentials}

In this section we present several equivalent ways to think about the parameter $f_\infty(G)$.  

Consider an $n$-vertex graph $G$, a distance function $d$ on $G$, and a realization of $(G,d)$ in $\Lpspace{\infty}{k}$; that is, a collection of points $(q_v)_{v \in V(G)} \in \mathbb{R}^k$ such that $|| q_v - q_w ||_\infty = d_{vw},$ for all $vw \in E(G)$. We can write a $k \times n$ matrix whose columns are the vectors $q_v$ for $v \in V(G)$. In this section we analyze this matrix by looking at its rows, which turn out to be potentials of a natural directed graph associated to $(G,d)$.

Let $D$ be an edge-weighted directed graph and let  $l : A(D) \to \mathbb{R}$ be the length function on the arcs of $D$. Note  that negative lengths are allowed. A function $p : V(D) \to \mathbb{R}$ is called a \emph{potential on  $D$} if $p(v) - p(u) \leqslant l(a),$ for all arcs $a = (u,v) \in A(D)$.  We recall  the following well-known result characterizing  the existence of a potential.

\begin{theorem} \label{thm:potential}
A weighted directed graph   $(D,l)$ admits a potential if and only if it does not contain any negative length directed cycle.
\end{theorem}

Now let $D = D(G,d)$ be the weighted directed graph obtained from $(G,d)$ as follows.  First, we bidirect all edges of $G$. For every edge $uv \in E(G)$, we define the length of both $(u,v)$ and $(v,u)$ to be $d_{uv}$. That is, the length function $l$ on $D$ is given by
\begin{equation}
\label{eq:lengths}
l(u,v) = l(v,u) := d_{uv}, \quad \forall uv \in E(G).
\end{equation}
Note that $p : V(D) \to \mathbb{R}$  is a potential on $D$ if and only if $|p(v)-p(u)|\le d_{uv},\ \forall uv\in E(G)$. An edge $uv \in E(G)$ is \emph{tight} for a potential $p$ on $D$ if  $|p(v) - p(u)|= d_{uv}$.

Let $(q_v)_{v \in V(G)}$ be a realization of $(G,d)$ in $\Lpspace{\infty}{k}$.
Clearly, if we define $p_i(v) := q_v(i)$ for $i \in [k]$ and $v \in V(G)$, we have that $p_i$ is a potential for all $i \in [k]$.  Moreover, every edge of $G$ is tight in some $p_i$. It is easy to see that the converse also holds.  
\begin{lemma} \label{lem:potentials}
Let $G$ be a graph. A distance function $d$ on $G$ admits a realization $(q_v)_{v \in V(G)}$ in $\ell_\infty^k$ if and only if the directed graph $D = D(G,d)$ with lengths as in \eqref{eq:lengths} admits a collection of potentials $(p_i)_{i \in [k]}$ such that every edge $uv \in E(G)$ is tight in some $p_i$. Moreover, in this equivalence we can take $q_v(i) = p_i(v),$ for all $i \in [k]$ and $v \in V(G)$.
\end{lemma}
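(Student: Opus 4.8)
The plan is to prove both implications by directly unpacking the definition of $\|\cdot\|_\infty$, together with the observation recorded just above the lemma: a function $p : V(D) \to \mathbb{R}$ is a potential on $D$ precisely when $|p(v) - p(u)| \le d_{uv}$ for every edge $uv \in E(G)$. This equivalence is the only place where the bidirection of $G$ in the construction of $D$ is used, and it is immediate since the two arcs $(u,v)$ and $(v,u)$ have the same length $d_{uv}$, so the pair of constraints $p(v)-p(u) \le d_{uv}$ and $p(u)-p(v) \le d_{uv}$ together amount to $|p(v)-p(u)| \le d_{uv}$.

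For the forward implication, I would start from a realization $(q_v)_{v \in V(G)}$ of $(G,d)$ in $\ell_\infty^k$ and set $p_i(v) := q_v(i)$ for $i \in [k]$ and $v \in V(G)$. Fixing an edge $uv \in E(G)$, the realization condition says $\max_{i \in [k]} |q_u(i) - q_v(i)| = \|q_u - q_v\|_\infty = d_{uv}$. In particular $|p_i(u) - p_i(v)| \le d_{uv}$ for every $i$, so each $p_i$ is a potential on $D$ by the observation above; and the maximum is attained at some index $i$, which is exactly the statement that $uv$ is tight in $p_i$. Since $uv$ was arbitrary, this produces the required collection of potentials.

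For the backward implication, I would start from a collection $(p_i)_{i \in [k]}$ of potentials on $D$ in which every edge is tight in some $p_i$, and define $q_v := (p_1(v), \ldots, p_k(v)) \in \mathbb{R}^k$. For a fixed edge $uv \in E(G)$, each $p_i$ being a potential gives $|p_i(u) - p_i(v)| \le d_{uv}$, hence $\|q_u - q_v\|_\infty = \max_{i \in [k]} |p_i(u) - p_i(v)| \le d_{uv}$; and tightness of $uv$ in some $p_i$ gives that this same maximum is at least $d_{uv}$. Therefore $\|q_u - q_v\|_\infty = d_{uv}$, so $(q_v)_{v \in V(G)}$ is a realization of $(G,d)$ in $\ell_\infty^k$. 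In both directions the correspondence between realizations and potential collections is the asserted one, $q_v(i) = p_i(v)$, so the ``moreover'' clause needs no separate argument.

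I do not expect any genuine obstacle here: the lemma is essentially a restatement of the definitions, and the single substantive point — that being a potential on the bidirected graph $D$ is the same as satisfying $|p(u) - p(v)| \le d_{uv}$ on every edge — has already been noted in the text preceding the statement, so the write-up is pure bookkeeping.
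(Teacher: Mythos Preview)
Your proposal is correct and matches the paper's approach exactly: the paper sketches the forward direction in the paragraph preceding the lemma (defining $p_i(v):=q_v(i)$ and noting each $p_i$ is a potential with every edge tight somewhere) and then simply asserts that ``the converse also holds,'' leaving the lemma without a formal proof. Your write-up just fills in the straightforward bookkeeping the paper omits.
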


In view of Lemma~\ref{lem:potentials}, we get  a combinatorial approach for  constructing and analyzing realizations. For $F \subseteq E(G)$, let $\overrightarrow{F}$ denote some orientation of $F$. We say that $\overrightarrow{F}$ is a   \emph{feasible orientation} (with respect to $d$) if there exists a potential $p$ on $D(G,d)$ such that $p(v) - p(u) = d_{uv},$ for all $(u,v) \in \overrightarrow{F}$. 
See Figure~\ref{fig:W4orientations} for an illustration. 
 We say that $F \subseteq E(G)$ is \emph{feasible} if it admits a feasible  orientation. If a set of edges is not feasible, we say that it is \emph{infeasible}. Notice that $\overrightarrow{F}$ is a feasible orientation if and only if the opposite orientation $\overleftarrow{F}$ is a feasible orientation.  Furthermore, note that a subset of a feasible set is also feasible.
 
\begin{figure}
\centering
\includegraphics[width=0.95\textwidth]{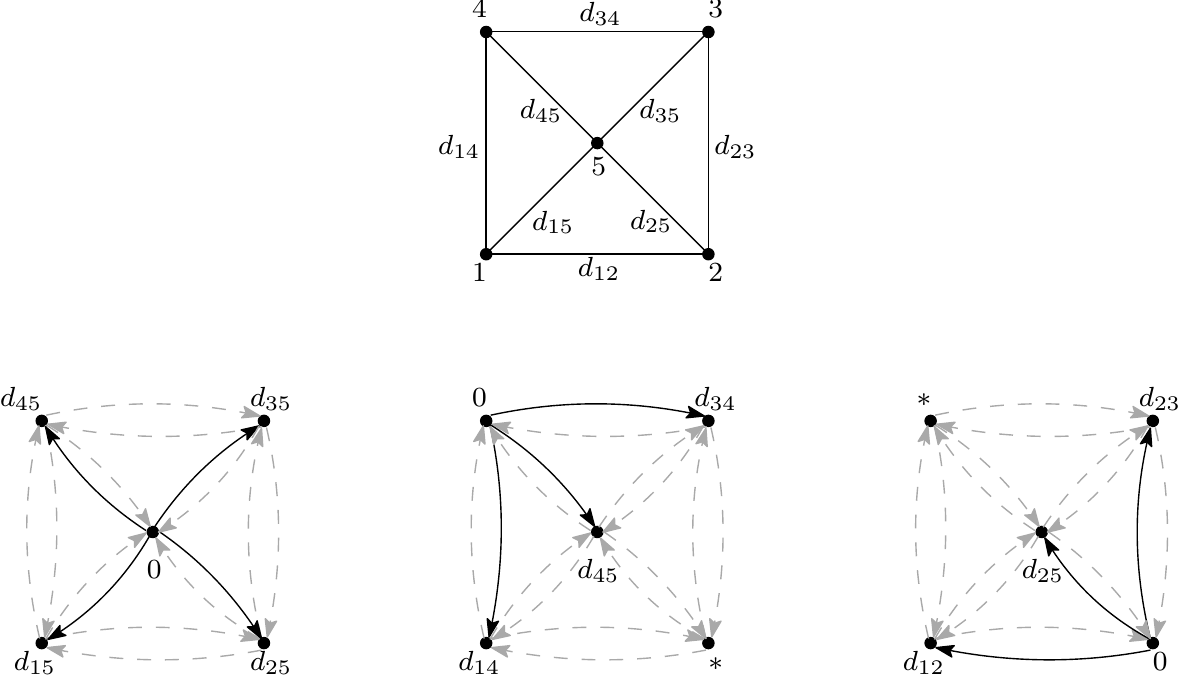}
\caption{If $G$ denotes the $5$-vertex wheel $W_4$, then $(G,d)$ admits a realization in $\ell_\infty^3$ for all distance functions $d$, as shown by these three potentials. (The two values labeled $*$ are not used to realize any edge so they can be set to any value that is feasible.)}
\label{fig:W4orientations}
\end{figure} 
 
 The  notion of feasible sets allows to reformulate Lemma \ref{lem:potentials} as follows. 

\begin{lemma} \label{lem:feasible}
Let $G$ be a graph and $d$ be a distance function on $G$. The pair $(G,d)$ admits a realization in $\ell_\infty^k$ if and only if there exist  feasible  sets $(F_i)_{i \in [k]}$ such that $\cup_{i=1}^k F_i=E(G)$.
\end{lemma}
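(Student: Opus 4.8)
The plan is to deduce Lemma~\ref{lem:feasible} directly from Lemma~\ref{lem:potentials}, using as a bridge the observation that a single potential $p$ on $D(G,d)$, together with its set of tight edges, determines a feasible orientation of that set, and conversely. So the whole argument is a dictionary between ``a collection $(p_i)_{i\in[k]}$ of potentials in which every edge is tight somewhere'' and ``a cover of $E(G)$ by $k$ feasible sets''.

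For the forward implication I would start from a realization of $(G,d)$ in $\ell_\infty^k$ and apply Lemma~\ref{lem:potentials} to obtain potentials $p_1,\dots,p_k$ on $D(G,d)$ such that each edge of $G$ is tight in some $p_i$. Setting $F_i$ to be the set of edges tight in $p_i$ immediately gives $\bigcup_{i=1}^k F_i = E(G)$, so it remains to check that each $F_i$ is feasible. To that end I would orient each $uv\in F_i$ from $u$ to $v$ when $p_i(v)-p_i(u)=d_{uv}$ and from $v$ to $u$ otherwise (one of these holds since $uv$ is tight), obtaining an orientation $\overrightarrow{F_i}$ with $p_i(v)-p_i(u)=d_{uv}$ on every arc; as $p_i$ is a potential on $D(G,d)$, this $\overrightarrow{F_i}$ is a feasible orientation, hence $F_i$ is feasible.

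For the reverse implication I would take feasible sets $F_1,\dots,F_k$ with $\bigcup_{i=1}^k F_i = E(G)$, fix for each $i$ a feasible orientation $\overrightarrow{F_i}$ and a witnessing potential $p_i$ on $D(G,d)$ with $p_i(v)-p_i(u)=d_{uv}$ for all $(u,v)\in\overrightarrow{F_i}$, and note that then every edge of $F_i$ is tight in $p_i$. Since the $F_i$ cover $E(G)$, every edge of $G$ is tight in at least one $p_i$, so Lemma~\ref{lem:potentials} yields a realization of $(G,d)$ in $\ell_\infty^k$, given explicitly by $q_v(i)=p_i(v)$. I do not expect any genuine obstacle: the substance is already contained in Lemma~\ref{lem:potentials} and the definition of feasibility, and the only point requiring a moment's care is verifying that the tight edges of one potential can always be consistently oriented into a feasible orientation, which is immediate from the definition of a tight edge.
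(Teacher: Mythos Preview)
Your proposal is correct and is precisely the argument the paper has in mind: the paper presents Lemma~\ref{lem:feasible} as a direct reformulation of Lemma~\ref{lem:potentials} and omits the details, which are exactly the dictionary you spell out between ``potentials with every edge tight somewhere'' and ``a cover by feasible sets''. There is nothing to add.
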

Given an  orientation $\overrightarrow{F}$, we define   a  modification of  the length function $l(d)$  as follows.
\begin{equation}
\label{eq:lengths_with_forcing}
l(u,v) := 
\begin{cases}
d_{uv}, &\text{if } uv \in E(G), (u,v) \notin \overrightarrow{F};\\
-d_{uv}, &\text{if } (u,v) \in \overrightarrow{F}.
\end{cases}
\end{equation}
We denote this length function by  $l(d,\overrightarrow{F})$. Note that $\overrightarrow{F}$ is a feasible orientation  if and only if  $(G,l(d,\overrightarrow{F}))$ admits a potential. By Theorem~\ref{thm:potential}, this happens if and only if the weighted digraph $(G,l(d,\overrightarrow{F}))$
does not contain  a directed cycle of negative length.

We demonstrate the usefulness of Lemma \ref{lem:feasible} by quickly deriving some non-trivial upper and lower bounds for $f_\infty(G)$.  

Note that for every distance function $d$ on $G$ and every vertex $v$ of $G$, the star centered at $v$ is always feasible with respect to $d$, as can be seen by orienting all the edges of the star outwards (as in Figure~\ref{fig:W4orientations}). From this we obtain the following upper bound.

\begin{lemma} \label{vertexcover}
For every graph $G$, 
$$f_\infty(G)\le \tau(G),$$
where $\tau(G)$ denotes the minimum size of a vertex cover of $G$.
\end{lemma}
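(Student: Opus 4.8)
The plan is to use Lemma~\ref{lem:feasible}: it suffices to cover $E(G)$ by $\tau(G)$ feasible sets, regardless of the distance function $d$. Let $C \subseteq V(G)$ be a minimum vertex cover, so $|C| = \tau(G)$ and every edge of $G$ has at least one endpoint in $C$. For each vertex $v \in C$, let $S_v$ be the set of edges of $G$ incident with $v$, i.e.\ the star centred at $v$. Since $C$ is a vertex cover, $\bigcup_{v \in C} S_v = E(G)$. Thus the only thing to verify is that each star $S_v$ is feasible with respect to $d$.

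To see that $S_v$ is feasible, orient every edge of $S_v$ away from $v$, obtaining an orientation $\overrightarrow{S_v}$ in which every arc has the form $(v,w)$ for a neighbour $w$ of $v$. I claim the potential $p$ defined by $p(v) = 0$ and $p(u) = d_{vu}$ for every neighbour $u$ of $v$, and $p(u) = 0$ for all remaining vertices (or any feasible extension), works; more cleanly, it is enough to exhibit a potential on $D(G,d)$ that is tight and correctly oriented on all of $\overrightarrow{S_v}$. By the discussion following Lemma~\ref{lem:feasible}, $\overrightarrow{S_v}$ is feasible if and only if the weighted digraph $(G, l(d, \overrightarrow{S_v}))$ has no negative-length directed cycle, where the arcs $(v,w) \in \overrightarrow{S_v}$ receive length $-d_{vw}$ and all other arcs $(x,y)$ receive length $d_{xy} \ge 0$. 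In any directed cycle, each time the cycle uses an arc $(v,w)$ of negative length $-d_{vw}$, it must subsequently leave $w$ and eventually return to $v$, and to do so it must traverse a path from $w$ back to $v$ in $G$; since $d$ is a distance function, that path has length at least $d_{vw}$ (the single edge $vw$ is a shortest $v$–$w$ path), so the total contribution of this excursion is nonnegative. More carefully: group the cycle into maximal excursions starting at $v$; each excursion begins with one arc $(v,w)$ of length $-d_{vw}$ followed by a directed walk from $w$ to $v$ using only nonnegative arc lengths, and that walk contains a $w$–$v$ path in $G$ of length $\ge d_{vw}$, so each excursion has nonnegative total length, and hence so does the whole cycle. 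Therefore $(G, l(d,\overrightarrow{S_v}))$ has no negative directed cycle, so $\overrightarrow{S_v}$ is feasible and $S_v$ is a feasible set.

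Combining, $E(G)$ is covered by the $\tau(G)$ feasible sets $(S_v)_{v \in C}$, so by Lemma~\ref{lem:feasible} every distance function $d$ on $G$ admits a realization in $\ell_\infty^{\tau(G)}$, which gives $f_\infty(G) \le \tau(G)$.

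The only mildly delicate point is the verification that the star is feasible, i.e.\ that the modified digraph has no negative directed cycle; this is exactly where the hypothesis that $d$ is a distance function (every path from $x$ to $y$ has length at least $d_{xy}$) is used, via the excursion-decomposition argument above. Everything else is immediate from Lemma~\ref{lem:feasible} and the definition of a vertex cover. In fact, the paper has already observed in the paragraph preceding the statement that the star centred at any vertex is feasible (by orienting its edges outwards), so this lemma really is just that observation combined with Lemma~\ref{lem:feasible}, and the proof can be kept to a couple of lines.
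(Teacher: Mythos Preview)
Your proposal is correct and follows exactly the paper's approach: the paper states (in the paragraph immediately preceding the lemma) that every star is feasible when oriented outwards, and then deduces the bound via Lemma~\ref{lem:feasible} and the definition of a vertex cover. Your excursion argument for the feasibility of the star is a bit more elaborate than needed---since a directed cycle is simple, it uses at most one negative arc $(v,w)$, and the remaining $w$--$v$ path has length at least $d_{vw}$ because $d$ is a distance function---but it is correct.
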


We say that a distance function $d$ is \emph{generic} with respect to $G$ if for every cycle $C$ in $G$ and $S \subseteq E(C)$, we have $\sum_{e \in S} d_{e} \neq \sum_{e \in E(C) \setminus S} d_e$. Every distance function $d$ on $G$ can be perturbed to a nearby generic distance function $d'$. Furthermore, we have $f_\infty(G) \leqslant k$ if and only if $(G,d)$ can be realized in $\ell_{\infty}^k$ for every \emph{generic} distance function $d$.

Observe that if $d$ is generic, every feasible set is acyclic. Therefore, we immediately obtain the following lemma.
 
\begin{lemma} \label{arboricity}
For every graph $G$, 
$$f_\infty(G)\ge \Upsilon(G),$$
where $\Upsilon(G)$ denotes the minimum number of forests required to partition $E(G)$.
\end{lemma}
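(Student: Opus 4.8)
The plan is to combine the characterization of realizations by feasible sets (Lemma~\ref{lem:feasible}) with the fact, already flagged in the text, that genericity forbids cycles inside a feasible set. First I would set $k := f_\infty(G)$ and, using the perturbation remark preceding the lemma, fix a distance function $d$ on $G$ that is generic with respect to $G$. Since $f_\infty(G) \le k$, the pair $(G,d)$ admits a realization in $\ell_\infty^k$, so by Lemma~\ref{lem:feasible} there are feasible sets $F_1,\dots,F_k$ with $\bigcup_{i=1}^{k} F_i = E(G)$.

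Next I would spell out why each $F_i$ is acyclic. Suppose some $F_i$ contains the edge set of a cycle $C$. Let $\overrightarrow{F_i}$ be a feasible orientation of $F_i$, witnessed by a potential $p$ on $D(G,d)$ with $p(v) - p(u) = d_{uv}$ for every arc $(u,v) \in \overrightarrow{F_i}$. Choose a cyclic orientation of $C$ and sum the equalities $p(v) - p(u) = \pm d_{uv}$ over the edges of $C$ traversed in that cyclic order: the left-hand side telescopes to $0$, while the right-hand side equals $\sum_{e \in S} d_e - \sum_{e \in E(C) \setminus S} d_e$, where $S \subseteq E(C)$ is the set of edges on which the chosen cyclic orientation agrees with $\overrightarrow{F_i}$. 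This contradicts the defining property of a generic distance function. Hence every feasible set is a forest; in particular each $F_i$ is a forest.

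Finally, since a subset of a feasible set is feasible and a subset of a forest is a forest, I would turn the cover $F_1,\dots,F_k$ into a partition of $E(G)$ into at most $k$ forests, for instance by setting $F_i' := F_i \setminus (F_1 \cup \dots \cup F_{i-1})$. This exhibits a partition of $E(G)$ into at most $k$ forests, so $\Upsilon(G) \le k = f_\infty(G)$, as claimed. I do not expect a genuine obstacle here; the only points needing care are the sign bookkeeping in the telescoping sum (matching $+d_e$ versus $-d_e$ to whether the cyclic orientation agrees with $\overrightarrow{F_i}$) and the routine step of refining a cover by forests into a partition by forests.
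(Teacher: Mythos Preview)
Your proposal is correct and follows exactly the approach the paper indicates: pick a generic distance function, invoke Lemma~\ref{lem:feasible} to cover $E(G)$ by $k$ feasible sets, use the telescoping-sum argument to show genericity forces each feasible set to be acyclic, and then refine the cover to a partition into forests. The paper compresses all of this into the one-line observation preceding the lemma, so you have simply unpacked what the authors left implicit.
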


Our next result implies that, if $d$ is generic, every maximal feasible set is a spanning~forest.

\begin{lemma} \label{lem:feasible_contains_tree}
Let $G$ be a graph and $d$ be a distance function on $G$.  Then every maximal feasible set $F \subseteq E(G)$ contains a spanning forest.
\end{lemma}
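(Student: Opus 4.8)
The plan is to argue by contradiction: suppose $F$ is a maximal feasible set that does not contain a spanning forest. Then there is a vertex partition $V(G) = V_1 \cup V_2$ (both nonempty) such that no edge of $F$ crosses between $V_1$ and $V_2$, while $G$ itself has at least one such crossing edge $xy$ with $x \in V_1$, $y \in V_2$ (since if $F$ touched every component of $G$ we could grow a spanning forest; more precisely, the components of $(V(G),F)$ refine the components of $G$, and if $F$ misses a spanning forest then some $G$-component is split by $F$). I will then show that $F \cup \{xy\}$ is feasible, contradicting maximality of $F$.

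To do this I would take a feasible orientation $\overrightarrow{F}$ with witnessing potential $p$, so that the digraph $(G, l(d,\overrightarrow{F}))$ of \eqref{eq:lengths_with_forcing} has no negative-length directed cycle (Theorem~\ref{thm:potential}). I want to add one of the two orientations of $xy$ to $\overrightarrow{F}$ and still avoid negative directed cycles. The key observation is that any directed cycle through the newly oriented edge $xy$ must leave the side of the partition containing $xy$ and come back, hence must also use a second edge crossing the partition; but in $\overrightarrow{F}$ there are no forced (negative-length) crossing edges, so the only crossing arcs available in $(G, l(d,\overrightarrow{F} \cup \{(x,y)\}))$ besides $(x,y)$ itself are ordinary arcs of nonnegative length $d_e$ (for crossing edges $e \notin F$) together with $(y,x)$ of length $d_{xy}$. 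Concretely, I would argue: shifting the potential $p$ by a large constant $M$ on all vertices of $V_2$ (i.e. replacing $p$ by $p'$ with $p'|_{V_1}=p|_{V_1}$ and $p'|_{V_2}=p|_{V_2}+M$) keeps $p'$ a potential for $(G, l(d,\overrightarrow{F}))$ as long as $M$ is chosen so that the crossing edges' constraints are not violated — here I need $M$ not too large in one direction and not too small in the other, and genericity/the fact that these crossing edges are unforced gives the needed slack. Then I orient $xy$ in whichever direction is consistent with the adjusted potential becoming tight on $xy$, possibly after first rescaling so $p'(y)-p'(x)=d_{xy}$ exactly; the resulting $p'$ witnesses feasibility of $\overrightarrow{F}\cup\{(x,y)\}$.

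A cleaner way to phrase the same idea, which I would actually write up: since no arc of $\overrightarrow F$ crosses the cut $(V_1,V_2)$, for any real $t$ the function $p_t$ defined by $p_t(v)=p(v)$ for $v\in V_1$ and $p_t(v)=p(v)+t$ for $v\in V_2$ still satisfies $p_t(v)-p_t(u)=d_{uv}$ for all $(u,v)\in\overrightarrow F$. It remains a potential on $D(G,d)$ provided $|p_t(v)-p_t(u)|\le d_{uv}$ for all edges $uv$, which for non-crossing edges is automatic and for crossing edges $uv$ (with $u\in V_1$, $v\in V_2$) reads $|p(v)+t-p(u)|\le d_{uv}$. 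The set of valid $t$ is a closed interval $I$ (nonempty since $t=0$ works... wait, $t=0$ need not work for crossing edges — but $p$ itself is a potential, so $t=0$ does satisfy $|p(v)-p(u)|\le d_{uv}$, hence $0\in I$). Now I want some $t^*\in I$ with $p_{t^*}(y)-p_{t^*}(x)=\pm d_{xy}$, i.e. $p(y)+t^*-p(x)=\pm d_{xy}$; since $xy$ is a crossing edge, the constraint from $xy$ alone forces $p(y)+t-p(x)\in[-d_{xy},d_{xy}]$ exactly when one of these endpoints is attained, so the endpoints of $I$ coming from the edge $xy$ are precisely the two values making $xy$ tight — and at least one endpoint of $I$ must be "caused" by $xy$ or by another crossing edge; if it's caused by another crossing edge $uv$ we can still push further on $xy$... this is the subtle point.

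The main obstacle, then, is exactly this last point: showing we can make the new edge $xy$ tight without some other crossing edge's constraint getting in the way first. I expect to handle it by choosing the cut $(V_1,V_2)$ minimally — e.g. take $V_2$ to be a single $F$-component, or more carefully take the partition so that $xy$ is the only $G$-edge crossing it, which is possible: among all edges $e\in E(G)\setminus F$ whose endpoints lie in the same $G$-component but different $F$-components, pick one, and let $V_1,V_2$ be... hmm, the two $F$-components of its endpoints won't in general give a cut crossed only by $e$. The honest fix is: when a competing crossing edge $uv$ becomes tight at an endpoint of $I$ before $xy$ does, then $F\cup\{uv\}$ is feasible by the same argument, so replacing $xy$ by $uv$ we still contradict maximality — as long as $uv\notin F$, which holds because $uv$ crosses the cut and $F$ doesn't. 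So I would phrase the conclusion as: some edge crossing the cut can be added to $F$ preserving feasibility, contradicting maximality; which edge it is doesn't matter. This sidesteps the obstacle entirely and is the version I'd commit to in the writeup.
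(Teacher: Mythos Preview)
Your final argument is correct and is essentially the paper's proof: pick an $F$-component $X$ with a $G$-edge leaving it, shift the witnessing potential on $X$ until some crossing edge becomes tight, and add that edge to $F$ to contradict maximality. The only cosmetic difference is that the paper first uses maximality to choose $p$ making \emph{only} the edges of $F$ tight (so the shift is strictly positive), whereas you allow any witnessing potential and absorb the ``already tight at $t=0$'' case into the endpoint argument for your interval~$I$; either way works.
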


\begin{proof}
Towards a contradiction, suppose that $F \subseteq E(G)$ is a maximal feasible set that does not contain a spanning forest of $G$. Let $X$ be the vertex set of a component of $(V(G),F)$ such that $G$ contains at least one edge with exactly one end in $X$.  Let $p$ be any potential that makes all the edges of $F$ tight but no other edges. Let $\Delta$ be as large as possible with the property that $p':= p + \Delta \sum_{v \in X} e_v$ is a potential, where $e_v$ denotes the characteristic vector for the vertex $v$. Then the set of edges that are tight with respect to $p'$ is a proper superset of $F$, a contradiction.
\end{proof}

\section{$f_{\infty}(K_7) = 5$} \label{sec:k7}

Since $\lfloor \frac{2n}{3} \rfloor=n-2$ for $n \in \{4,5,6\}$, it follows that $f_\infty(K_3)=2, f_\infty(K_4)=2, f_\infty(K_5)=3$, and $f_\infty(K_6)=4$.  Thus, $n=7$ is the smallest value for which $f_\infty(K_n)$ is unknown.  In this section we show that $f_{\infty}(K_7) = 5$.  This result is not needed for our main theorem but may be of independent interest. 

\begin{proposition}
$f_{\infty}(K_7) = 5$.
\end{proposition}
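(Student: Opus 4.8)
The plan is to establish $f_\infty(K_7) = 5$ by proving the two inequalities separately. For the lower bound $f_\infty(K_7) \geq 5$, I would invoke Lemma~\ref{arboricity}, which says $f_\infty(G) \geq \Upsilon(G)$, the minimum number of forests needed to partition $E(G)$. Since $K_7$ has $\binom{7}{2} = 21$ edges and a spanning forest on $7$ vertices has at most $6$ edges, we need at least $\lceil 21/6 \rceil = 4$ forests, which is not yet enough. So instead I would exhibit a specific generic distance function $d$ on $K_7$ that cannot be realized in $\ell_\infty^4$. By Lemma~\ref{lem:feasible}, this amounts to showing that $E(K_7)$ cannot be covered by $4$ feasible sets for this $d$; by genericity and Lemma~\ref{lem:feasible_contains_tree}, each such set is a subforest, and since $4 \cdot 6 = 24 \geq 21$ a pure counting argument fails, so the obstruction must be genuinely combinatorial. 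The natural candidate is to take $d$ to be (a generic perturbation of) the graph metric of $K_7$, i.e.\ all edge lengths equal to $1$, and argue that feasible sets with respect to this metric are severely constrained — for instance, that a feasible orientation cannot contain a directed path of length $2$ whose total exceeds the direct edge, forcing feasible sets to look like stars or near-stars, so that few of them cannot cover all $21$ edges. I would make this precise using the negative-cycle characterization following \eqref{eq:lengths_with_forcing}: an orientation $\overrightarrow{F}$ is feasible iff $(K_7, l(d, \overrightarrow{F}))$ has no negative directed cycle, and with $d \equiv 1$ a triangle with two forward arcs of weight $-1$ and one backward arc of weight $+1$ already has negative length, so no feasible set can contain two edges of a triangle both oriented "the same way around." Converting this triangle obstruction into a global bound on how many edges a union of $4$ feasible sets can cover is the crux of the lower bound.

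For the upper bound $f_\infty(K_7) \leq 5$, I would directly construct, for every generic distance function $d$ on $K_7$, five feasible sets whose union is $E(K_7)$, again using Lemma~\ref{lem:feasible}. One clean way is to use the star-decomposition idea from Lemma~\ref{vertexcover}: five stars centered at five of the seven vertices cover all edges except the single edge between the two leftover vertices, so $f_\infty(K_7) \leq 6$ trivially, and I need to save one more. To do this I would take four stars plus one cleverly chosen feasible set that is not a star: pick vertices $v_1, \dots, v_5$; the stars at $v_1, v_2, v_3$ cover all edges meeting $\{v_1,v_2,v_3\}$, leaving only $K_7$ restricted to $\{v_4, v_5, v_6, v_7\}$, a $K_4$ on $6$ edges, which must be covered by two feasible sets — and indeed $f_\infty(K_4) = 2$ (stated in the excerpt), with the two feasible sets being explicit. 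One must check, though, that the relevant $K_4$-feasible sets extend compatibly; since feasibility of a set in $K_7$ only requires a potential on all of $K_7$ and any potential on the $4$-vertex subgraph extends (the rest of the constraints are just $|p(x)-p(y)| \le d_{xy}$, satisfiable by spacing the remaining potential values appropriately given genericity), this should go through. So the upper bound reduces to: three stars plus a $2$-forest decomposition of $K_4$, giving $3 + 2 = 5$.

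The main obstacle I anticipate is the lower bound. Showing $f_\infty(K_7) > 4$ requires ruling out every possible covering of the $21$ edges by $4$ feasible sets, and since the counting bound is slack, this needs a structural understanding of which subforests of $K_7$ are feasible for the chosen $d$. I expect the argument to hinge on a lemma of the form: for the all-ones distance function (or its generic perturbation), a feasible set, viewed as a subgraph, cannot contain a path on $3$ edges, or more generally has bounded diameter — essentially because a monotone potential path of length $\geq 2$ violates some triangle inequality when combined with the direct edge. If feasible sets are forced to be "shallow" (say, stars or double-stars with bounded reach), then covering the dense graph $K_7$ with only $4$ of them becomes impossible by a refined counting/incidence argument, and one carefully picks $d$ so that every candidate $4$-cover is excluded. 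Getting exactly the right constraint on feasible sets, and then the matching combinatorial impossibility, is where the real work lies; the upper bound and the translation to feasible sets via Lemmas~\ref{lem:feasible} and~\ref{lem:feasible_contains_tree} are comparatively routine.
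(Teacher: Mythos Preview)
Your upper bound is fine (and in fact the paper simply quotes the known Witsenhausen bound $f_\infty(K_n) \le n-2$), but the lower-bound plan has a genuine gap: the all-ones metric is the wrong choice, and perturbing it does not rescue the argument.

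With $d \equiv 1$ on $K_7$, your triangle observation shows that a feasible orientation has no directed $P_3$, so every vertex is a source or a sink and a feasible set is just a bipartite subgraph. But $K_7$ is covered by only \emph{three} complete bipartite subgraphs (index the vertices by nonzero strings in $\{0,1\}^3$ and take the three bit-cuts); hence $(K_7,\mathbf{1})$ embeds in $\ell_\infty^3$, not merely $\ell_\infty^4$. So the unperturbed metric gives a bound of $3$, far from $5$.

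A generic perturbation does kill cycles in feasible sets, but your hoped-for structural lemma (``a feasible set cannot contain a $3$-edge path'' or ``has bounded diameter'') is false. For the path $a\!-\!b\!-\!c\!-\!d$ with the source--sink orientation $(a,b),(c,b),(c,d)$, the only nontrivial constraint beyond triangle inequalities is $|d_{ab}+d_{cd}-d_{bc}|\le d_{ad}$; with $d_e=1+\epsilon_e$ this is a single linear inequality in the $\epsilon$'s that holds for roughly half of all perturbations. Feasible sets can thus be arbitrary spanning trees subject to a collection of such inequalities, and there is no clean ``shallowness'' to exploit.

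The paper avoids this by using a highly \emph{asymmetric} metric: order the edges lexicographically (so $v_1v_2$ comes first, $v_6v_7$ last) and set $d(e)=2^m+2^{r(e)}$ where $r(e)$ is the rank. The exponential separation yields sharp combinatorial constraints: for $i<j<k<\ell$ the edges $v_iv_j$ and $v_kv_\ell$ can never lie in the same feasible forest, and at most two of $v_iv_k,\,v_jv_k,\,v_jv_\ell$ can. These two rules drive a short case analysis that forces a monochromatic cycle in any $4$-colouring of $E(K_7)$ by feasible forests. The crucial idea you are missing is precisely this carefully engineered, hierarchically weighted metric; a near-uniform metric is too symmetric to separate $4$ from $5$.
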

\begin{proof}
We already know that $f_{\infty}(K_7) \leq 5$, let 
us prove that $f_{\infty}(K_7) \geq 5$. 
To this aim, enumerate the vertices of $K_7$ as 
$v_1, \dots, v_7$, and define a linear ordering $L$ on 
its edges by letting, for $i < j$ and $k < \ell$, 
$$v_iv_j >_L v_kv_{\ell}$$ if $i < k$, or $i=k$ and 
$j < \ell$. 
Let $m:= 21$ be the number of edges. 
Define a distance function $d$ on the graph by letting 
$d(e) := 2^m + 2^r$ for each edge $e$, where $r$ is the rank of $e$ in the ordering $L$. (Thus $v_1v_2$ has rank $m$ and $v_6v_7$ has rank $1$.) It is easy to check that $d$ is a generic distance function. 

We claim that $(K_7, d)$ cannot be realized 
in $\Lpspace{\infty}{4}$. 
Arguing by contradiction, assume it can. 
Consider a partition of the edges into four 
feasible forests $F_1, \dots, F_4$. 
Before analyzing these, 
let us note a few properties of a feasible forest $F$ (the easy proofs are left to the reader).
\begin{enumerate}
\item a feasible orientation $\overrightarrow{F}$ of $F$ 
cannot contain a length-$2$ directed path, hence 
 $\overrightarrow{F}$ is uniquely determined 
 (up to reversing all arcs);
\item \label{prop:atmostone}
if $i < j < k < \ell$ then at most one 
of the two edges $v_iv_j$ and $v_kv_\ell$ is in $F$; 
\item \label{prop:atmostwo}
if $i < j < k < \ell$ then at most two 
of the three edges $v_iv_k$, $v_jv_k$, $v_jv_{\ell}$ 
are in $F$. 
\end{enumerate}

Now color each edge $e$ of the graph with the index $i$ of the 
forest $F_i$ it is included in. 
By~\eqref{prop:atmostone} 
we may assume without loss of generality that 
$v_1v_2$, $v_3v_4$, and $v_5v_6$ are colored $1$, $2$, and $3$ respectively.  
By the same property, none of the two edges $v_5v_7$, $v_6v_7$ are colored $1$ or $2$, 
and they cannot both be colored $3$ (otherwise $v_5v_6v_7$ would be a triangle in $F_3$), 
thus there exists $a\in\{5, 6\}$ such that $v_av_7$ is colored $4$. 

Next consider the four edges between the set $\{v_1,v_2\}$ and $\{v_3, v_4\}$.  
None of these is colored $3$ by~\eqref{prop:atmostone} (because of the edge $v_5v_6$) 
or $4$ (because of the edge $v_av_7$), so each of them is colored $1$ or $2$. 
Moreover, in order to avoid monochromatic triangles, the four edges are split into 
two matchings $M_1$ and $M_2$ of size $2$, colored $1$ and $2$ respectively. 

Let $X$ be the set of edges $v_iv_j$ with $i, j \geq 3$ that are distinct from $v_3v_4$. 
(Thus $|X| = 9$.)
No edge in $X$ is colored $1$ (because of $v_1v_2$). 
We claim that no edge in $X$ is colored $2$ either. 
This is clear for those not incident to $v_3$, thanks to the edge of $M_2$ that is incident to $v_3$. 
Now, suppose for a contradiction that $f\in X$ is incident to $v_3$ and is colored $2$. 
Then letting $e$ be the edge of $M_2$ incident to $v_4$, 
we see that the edges $e, v_3v_4, f$ are all in $F_2$, contradicting property~\eqref{prop:atmostwo}. 

All edges in $X$ are colored $3$ or $4$ 
but $X$ has size $9$ and spans only $5$ vertices. 
Therefore, there is a monochromatic cycle in $X$. 
This final contradiction concludes the proof. 
\end{proof}

\section{Degree-$2$ Vertices} \label{sec:bounds}

In this section we show that we can essentially ignore degree-$2$ vertices when computing~$f_\infty(G)$.

Let $G_1$ and $G_2$ be graphs that each contain a clique $K$ of size $k$.  A \emph{$k$-sum} of $G_1$ and $G_2$ along $K$ is a graph obtained by 
gluing $G_1$ and $G_2$ along $K$ and then deleting some of the edges of $K$.  In the special case of $2$-sums, we use the notation $G_1 \oplus_e G_2$ if
we keep the edge $e$, and $G_1 +_e G_2$ if we delete the edge $e$.

\begin{lemma} \label{degree2}
Let $H$ be a graph and let $e\in E(H)$. If $f_\infty (H) \geq 2$, then $f_\infty (H)=f_\infty (H\oplus_e K_3)$.  
\end{lemma}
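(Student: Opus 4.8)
The plan is to prove the two inequalities $f_\infty(H\oplus_e K_3) \geq f_\infty(H)$ and $f_\infty(H\oplus_e K_3) \leq f_\infty(H)$ separately, using the feasible-set machinery from Lemma~\ref{lem:feasible}. The first inequality is immediate from minor monotonicity, since $H$ is a minor of $H\oplus_e K_3$ (contract one of the two new edges of the triangle). So the real content is the upper bound $f_\infty(H\oplus_e K_3) \leq f_\infty(H)$, and this is where the hypothesis $f_\infty(H)\geq 2$ must be used.

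For the upper bound, write $G := H \oplus_e K_3$, let $e = xy$, and let $z$ be the new degree-$2$ vertex with neighbors $x,y$. Fix a generic distance function $d$ on $G$; by the remark following Lemma~\ref{vertexcover} it suffices to realize $(G,d)$ in $\ell_\infty^k$ where $k := f_\infty(H)$. First I would restrict $d$ to $H$ to get a distance function $d_H$ on $H$: one must check that $d_H$ is still a distance function, i.e. that the edge $xy$ is still a shortest path in $H$ — this holds because every $x$–$y$ path in $H$ is also a path in $G$, and $d_H = d$ on common edges. By definition of $f_\infty(H)$, there is a realization of $(H,d_H)$ in $\ell_\infty^k$, equivalently (Lemma~\ref{lem:feasible}) a cover $E(H) = F_1 \cup \dots \cup F_k$ by feasible sets, with potentials $p_1,\dots,p_k$ on $D(H,d_H)$ realizing them. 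The task is to extend these potentials to $z$ and to cover the two new edges $xz$, $yz$, without breaking feasibility — and here $k\geq 2$ gives us two potentials to play with.

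The key step is to choose, among the $k$ potentials, two of them, say $p_1$ and $p_2$, and set the value at $z$ appropriately in each so that $xz$ becomes tight in $p_1$ and $yz$ becomes tight in $p_2$. Concretely, for each $i$ we must define $p_i(z)$ so that $|p_i(z) - p_i(x)| \leq d_{xz}$ and $|p_i(z) - p_i(y)| \leq d_{yz}$ — i.e. $p_i(z)$ must lie in the intersection of two intervals. That intersection is nonempty precisely because $|p_i(x) - p_i(y)| \leq d_{xy} \leq d_{xz} + d_{yz}$ (the first inequality since $p_i$ is a potential and $xy\in E(H)$; the second by the triangle inequality built into $d$ being a distance function, as $xzy$ is an $x$–$y$ path). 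Within that nonempty interval we may additionally push $p_i(z)$ to an endpoint, making at least one of $xz, yz$ tight; and by choosing the left endpoint for one index and the right endpoint for another we can arrange that $xz$ is tight in $p_1$ and $yz$ is tight in $p_2$. Covering all other edges of $G$ is automatic since they lie in $H$ and are already covered. One should also verify that extending $p_i$ to $z$ keeps it a valid potential on all of $D(G,d)$ — but $z$ has only the two neighbors $x,y$, so the only new constraints are the two interval conditions just arranged.

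The main obstacle — and the only place where $f_\infty(H)\geq 2$ is essential — is that we genuinely need \emph{two distinct} potentials to which we can attach $z$: if $k=1$ there is a single potential $p_1$, and while we can still extend it to $z$ inside the nonempty interval, we can make at most one of $xz, yz$ tight, so one of the two new edges is left uncovered and the realization fails (indeed $f_\infty(K_3) = 2 > 1 = f_\infty(K_2)$, so the statement is false without the hypothesis). With $k\geq 2$ the argument goes through cleanly. A minor technical point to be careful about is genericity: after adding $z$ the distance function $d$ on $G$ is assumed generic, and its restriction $d_H$ to $H$ is then also generic, so all the earlier lemmas (in particular that feasible sets are acyclic) apply on both sides; this is only needed to invoke the reduction to generic $d$, not in the extension step itself.
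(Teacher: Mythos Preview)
Your overall strategy is sound and in fact more direct than the paper's (the paper works with feasible sets, invokes Lemma~\ref{lem:feasible_contains_tree} to extend $F_2$ to a maximal feasible set containing one of the new edges, and then uses a negative-cycle argument to add the other new edge to $F_1$). Working directly with potentials and interval endpoints is a clean alternative. However, there is a genuine gap in your justification of the key step.

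The sentence ``by choosing the left endpoint for one index and the right endpoint for another we can arrange that $xz$ is tight in $p_1$ and $yz$ is tight in $p_2$'' is not correct as stated. For a given potential $p_i$, the intersection $[p_i(x)-d_{xz},p_i(x)+d_{xz}]\cap[p_i(y)-d_{yz},p_i(y)+d_{yz}]$ may have \emph{both} endpoints coming from the same constraint: if, say, the $x$-interval is strictly contained in the $y$-interval (which happens precisely when $|p_i(x)-p_i(y)|<d_{yz}-d_{xz}$), then both endpoints make $xz$ tight and neither makes $yz$ tight. So ``left versus right'' does not control which edge becomes tight, and it can happen that several of your potentials can only tighten the same one of the two new edges.

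The fix is to exploit the edge $xy$ explicitly, which is exactly where the paper's proof begins (``Without loss of generality, we may assume that $uv\in F_1$''). Let $p_j$ be a potential with $|p_j(x)-p_j(y)|=d_{xy}$. The triangle inequalities $d_{xy}+d_{xz}\ge d_{yz}$ and $d_{xy}+d_{yz}\ge d_{xz}$ give $|p_j(x)-p_j(y)|\ge |d_{xz}-d_{yz}|$, so neither interval is strictly inside the other for $p_j$; hence the intersection has one endpoint from each constraint and $p_j$ can be extended to make \emph{either} of $xz,yz$ tight. Now assume without loss of generality that $d_{xz}\le d_{yz}$; then the $y$-interval is at least as long as the $x$-interval, so for \emph{every} $i$ the intersection has at least one endpoint from the $y$-interval, and any $p_i$ can be extended to make $yz$ tight. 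Using $k\ge 2$, pick any $i\ne j$, extend $p_i$ to tighten $yz$, and extend $p_j$ to tighten $xz$. This completes your argument. (Genericity is not actually needed anywhere in this proof.)
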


\begin{proof}
Set $G:=H\oplus_e K_3$, let $e=uv$ and let $w$ be the newly added vertex in $G$.  Clearly $f_\infty(G) \geq f_\infty (H)$ so it suffices to show that $f_\infty(G) \leq f_\infty (H)$. Let $d$ be any distance function on $G$. The restriction of $d$ to $H$ is also a distance function. Let $(F_i)_{i \in [k]}$ be a collection of $k := f_\infty(H) \geq 2$ feasible  sets of $(H,d)$ such that   $\cup_{i=1}^kF_i=E(H)$.

First, note that each $F_i$ is feasible in $G$. Indeed, since $d$ is a distance function, and in particular $d_{uw} + d_{wv} \geq d_{uv}$, we can extend any potential on $D(H,d)$ to a potential on $D(G,d)$ by carefully choosing the potential value at $w$ between the value at $u$ and that at $v$. 
Without loss of generality, we may assume that $uv \in F_1$. Now extend $F_2$ to a maximal feasible set $F'_2 \subseteq E(G)$. By Lemma~\ref{lem:feasible_contains_tree}, $F'_2$ contains a spanning forest. Hence, $F'_2$ contains either $wu$ or $wv$. Without loss of generality, assume that $wu \in F'_2$. 

Now let $\overrightarrow{F_1}$ be a feasible orientation of $F_1$. By reversing all the arcs of $\overrightarrow{F_1}$ if necessary, we may assume that $(u,v) \in \overrightarrow{F_1}$. We claim that $\overrightarrow{F'_1} := \overrightarrow{F_1} \cup \{(w,v)\}$ is a feasible orientation. Indeed, let $C$ be a negative directed cycle in $D = D(G,d)$ with respect to $l := l(d,\overrightarrow{F'_1})$. Since $\overrightarrow{F_1}$ is a feasible orientation, we may assume that $(u,w), (w,v) \in A(C)$.  
Now $l(u,w) + l(w,v) = d_{uw} - d_{wv} \geq -d_{uv} = l(u,v)$, which means that the length of $C$ does not increase if we shortcut it from $u$ to $v$. Since $\overrightarrow{F_1}$ is a feasible orientation, the length of the shortcut cycle is nonnegative, which contradicts our assumption that $C$ has negative length. Hence, $\overrightarrow{F'_1}$ is a feasible orientation and the corresponding edge set $F'_1$ is feasible.

We have found $k$ feasible sets $F'_1$, $F'_2$, $F_3$, \ldots, $F_k$ that cover each edge of $G$. Thus $(G,d)$ can be realized in $\Lpspace{\infty}{k}$. The lemma follows.
\end{proof}

We note that the assumption that  $f_\infty (H) \geq 2$ in Lemma \ref{degree2}  is necessary.  This can easily be seen by taking $H=K_2$ and $G=K_3$.  

We say that $G$ is obtained from $H$ by \emph{subdividing an edge $e$} if $G=H +_e K_3$.

\begin{lemma} \label{subdivision}
Let $G$ and $H$ be graphs such that $G$ is obtained from $H$ by subdividing an edge.  Then $f_\infty(G)=f_\infty (H)$.
\end{lemma}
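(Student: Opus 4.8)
The plan is to show that subdividing an edge does not change $f_\infty$ by combining Lemma~\ref{degree2} with the observation that $f_\infty(H) \geq 2$ holds in all nontrivial cases. Write $G = H +_e K_3$ where $e = uv \in E(H)$, so $G$ is obtained by adding a new vertex $w$ together with edges $uw$ and $wv$, and deleting $uv$. The easy direction is $f_\infty(G) \geq f_\infty(H)$: this follows from minor monotonicity, since contracting the edge $wv$ (say) in $G$ recovers $H$. So the content is the inequality $f_\infty(G) \leq f_\infty(H)$.

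First I would dispose of the degenerate cases. If $H$ has no $K_3$ minor at all, then neither does $G$ (subdividing an edge creates no new $K_3$ minor), and by Theorem~\ref{belkconnelly}(i) together with the remark that $f_\infty(G) \leq 1 \iff G$ has no $K_3$ minor, we get $f_\infty(G) = f_\infty(H) \leq 1$. Actually I need to be slightly careful: if $H = K_2$ or $H$ is a forest, $f_\infty(H) \in \{0,1\}$, and subdividing keeps it a forest, so both sides equal the same value; if $H$ has a $K_3$ minor then $f_\infty(H) \geq 2$. In the remaining case $f_\infty(H) \geq 2$, so Lemma~\ref{degree2} applies to the graph $H$ and the edge $e$: it gives $f_\infty(H \oplus_e K_3) = f_\infty(H)$. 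But $G = H +_e K_3$ differs from $H \oplus_e K_3$ only by the presence of the edge $e = uv$. Since $G$ is a subgraph of $H \oplus_e K_3$ (it is obtained by deleting $uv$), minor monotonicity gives $f_\infty(G) \leq f_\infty(H \oplus_e K_3) = f_\infty(H)$, and we are done.

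The main obstacle, such as it is, is handling the boundary cases cleanly — making sure the statement is interpreted correctly when $H$ is small or has no triangle minor, since Lemma~\ref{degree2} explicitly requires $f_\infty(H) \geq 2$ and that hypothesis genuinely fails for, e.g., $H = K_2$. Once one observes that $f_\infty$ depends only on whether certain minors are present and that subdivision preserves the minor relation in both directions here (contracting a subdivision edge undoes the subdivision; and subdividing never creates a $K_3$ minor where there was none), the low-complexity cases all collapse to equalities by Theorem~\ref{belkconnelly}(i), and the genuinely interesting case is exactly the one Lemma~\ref{degree2} was built for. So the proof is essentially: split on $f_\infty(H) \geq 2$ versus $f_\infty(H) \leq 1$, apply Lemma~\ref{degree2} in the former, and apply the $K_3$-minor characterization in the latter, using minor monotonicity to pass between $H +_e K_3$ and $H \oplus_e K_3$.
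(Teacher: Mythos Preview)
Your proposal is correct and follows essentially the same route as the paper: both split on whether $f_\infty(H) \geq 2$, handle the small case by observing that subdividing a forest yields a forest (equivalently, preserves having no $K_3$ minor), and in the main case apply Lemma~\ref{degree2} to $H \oplus_e K_3$ and then use minor monotonicity to pass down to $G = H +_e K_3$.
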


\begin{proof}
Clearly $f_\infty(G) \geq f_\infty (H)$  since $H$ is a minor of $G$. It remains to prove $f_\infty(G) \leq f_\infty (H)$.
If $f_\infty (H)=1$ then $H$ is a forest, and so is $G$, implying $f_\infty (G)=1$. 
Hence we may assume that $f_\infty (H) \geq 2$. 
Say that $G$ is obtained from $H$ by subdividing an edge $uv$ with a new vertex $w$. 
Let $G' := G + uv$. 
Since $G'$ is obtained from $H$ by adding a new vertex $w$ adjacent to the ends of the edge $uv$, 
we have that $f_\infty(G') = f_\infty (H)$ by Lemma~\ref{degree2}. 
The graph $G$ being a minor of $G'$, it follows that $f_\infty(G) \leq f_\infty(G') = f_\infty (H)$.
\end{proof}

\section{The graphs $W_4$ and $K_4 +_e K_4$} \label{sec:twographs}

In this section we show that $W_4$ and $K_4 +_e K_4$ are  
excluded minors for  $f_\infty (G) \leq 2$. 
\begin{figure}[b]
\centering
\begin{tikzpicture}[scale=1.5,inner sep=1.5pt]
\tikzstyle{vtx}=[circle,draw,thick,fill=black!10]
\node[vtx] (1) at (0,0) {\tiny $1$};
\node[vtx] (2) at (2,0) {\tiny $2$};
\node[vtx] (3) at (2,2) {\tiny $3$};
\node[vtx] (4) at (0,2) {\tiny $4$};
\node[vtx] (5) at (1,1) {\tiny $5$};

\draw[thick] (1) -- node[fill=white,inner sep=1pt,midway]{\tiny $18$} (2);
\draw[thick] (2) -- node[fill=white,inner sep=1pt,midway]{\tiny $17$} (3);
\draw[thick] (3) -- node[fill=white,inner sep=1pt,midway]{\tiny $20$} (4);
\draw[thick] (4) -- node[fill=white,inner sep=1pt,midway]{\tiny $24$} (1);
\draw[thick] (1) -- node[fill=white,inner sep=1pt,midway]{\tiny $200$} (5);
\draw[thick] (2) -- node[fill=white,inner sep=1pt,midway]{\tiny $200$} (5);
\draw[thick] (3) -- node[fill=white,inner sep=1pt,midway]{\tiny $200$} (5);
\draw[thick] (4) -- node[fill=white,inner sep=1pt,midway]{\tiny $200$} (5);
\end{tikzpicture}
\caption{$W_4$ and a distance function that cannot be realized in $\Lpspace{\infty}{2}$.}
\label{fig:bad_metric_W4}
\end{figure}
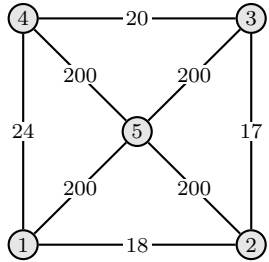

\begin{lemma} \label{fw4} We have that 
$f_\infty (W_4)=3$.
\end{lemma}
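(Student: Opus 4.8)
The plan is to establish the two inequalities $f_\infty(W_4)\le 3$ and $f_\infty(W_4)\ge 3$ separately. The upper bound is the easy direction: by Lemma~\ref{vertexcover} it suffices to exhibit a vertex cover of size $3$, and indeed the hub together with two non-adjacent rim vertices covers every edge of $W_4$, so $f_\infty(W_4)\le\tau(W_4)\le 3$. (Alternatively, Figure~\ref{fig:W4orientations} already displays, for every distance function, three feasible sets covering $E(W_4)$, which via Lemma~\ref{lem:feasible} gives the same bound.)

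For the lower bound $f_\infty(W_4)\ge 3$ I would produce a single distance function $d$ on $W_4$ that cannot be realized in $\ell_\infty^2$; the values in Figure~\ref{fig:bad_metric_W4} (rim lengths $18,17,20,24$ and all four spokes equal to $200$) are the intended witness, and one should first check that this $d$ is indeed a distance function, i.e.\ the edge $xy$ is a shortest $x$--$y$ path for each edge, which is immediate since the spoke lengths dominate. By Lemma~\ref{lem:feasible}, a realization in $\ell_\infty^2$ amounts to covering $E(W_4)$ by two feasible sets $F_1,F_2$. Since $W_4$ has $8$ edges and (perturbing $d$ to be generic, using the remark before Lemma~\ref{arboricity}) each feasible set is acyclic, hence has at most $4$ edges, both $F_1$ and $F_2$ must be spanning forests with exactly $4$ edges, so each is in fact a spanning tree. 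The combinatorial heart of the argument is then a short case analysis: up to symmetry there are only a few ways to partition $E(W_4)$ into two spanning trees, and for each I would write down the length function $l(d,\overrightarrow{F_i})$ of \eqref{eq:lengths_with_forcing} and exhibit a negative-length directed cycle in $D(W_4,d)$, contradicting feasibility via Theorem~\ref{thm:potential}. Concretely, a feasible orientation of a spanning tree in $W_4$ orients edges so that no vertex has two out-arcs along tree edges that would force a long alternating sum; the tightness equations $p(v)-p(u)=\pm d_{uv}$ around a rim-plus-two-spokes cycle force an equation of the form $\sum \pm d_e = 0$ over the edges of a $4$-cycle, and the specific numbers are chosen precisely so that every such signed sum is nonzero — this is exactly the genericity condition, and the large spoke values guarantee that any cycle using two spokes cannot balance.

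The step I expect to be the main obstacle is making the case analysis genuinely exhaustive rather than ad hoc: one must argue that a spanning-tree partition of $W_4$ is, up to the dihedral symmetry of the wheel (and swapping $F_1\leftrightarrow F_2$), one of only a handful of configurations, and then for each configuration correctly identify which $4$-cycle of $W_4$ (a rim edge together with two spokes and another rim edge, or the full rim) yields the forcing equation that $d$ violates. Organizing this cleanly — perhaps by first observing that each $F_i$, being a spanning tree of a wheel, either contains all four spokes (impossible, that has a cycle with the rim) or omits at least one spoke and one rim edge, and then tracking how the two trees must share the spokes — is where the real work lies; once the configurations are pinned down, checking that the chosen signed sum of $\{18,17,20,24,200\}$-values is nonzero is routine arithmetic.
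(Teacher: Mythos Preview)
Your upper bound via the vertex cover is exactly what the paper does. For the lower bound, however, the paper takes a different route from the one you sketch. Rather than invoking Lemma~\ref{lem:feasible} and running a spanning-tree case analysis, the paper argues \emph{geometrically} in $\ell_\infty^2$: since every spoke has length $200$, each rim point $q_i$ lies on the boundary of the $\ell_\infty$-ball of radius $200$ about $q_5$, and after a symmetry one may assume all four lie on two consecutive sides of that square. The case analysis is then on the relative positions of $q_1,\dots,q_4$ along those two sides (``directly left/right of'' versus ``diagonal''), and in each case one of the rim distances $18,17,20,24$ is violated.

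Your potential-based approach is perfectly viable---it is, in fact, precisely the method the paper uses for $K_4+_eK_4$ in Lemma~\ref{lem:k4+k4}---and your counting argument forcing both feasible sets to be spanning trees is correct. What you gain is uniformity with the rest of the paper's machinery; what the geometric argument gains is that the spoke value $200$ does almost all the work up front (it pins the rim points to two line segments), leaving only a short linear-order analysis on those segments. Two small points to tighten in your version: first, your phrase ``force an equation of the form $\sum\pm d_e=0$'' should really be an \emph{inequality} $|\sum_{e\in P}\pm d_e|\le d_f$ for each non-tree edge $f$ with tree path $P$; second, the claim that ``any cycle using two spokes cannot balance'' is not literally true (two spokes of equal length $200$ do balance when oriented oppositely), so the contradictions in your case analysis will come from the rim-edge constraints, not from the spokes directly. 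Once you enumerate the complementary spanning-tree pairs (up to the dihedral symmetry there are only the $3{+}1$-spoke and $2{+}2$-spoke splits to consider), checking infeasibility of at least one tree in each pair is a routine computation with the given numbers.
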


\begin{proof}
By Lemma \ref{vertexcover}, $f_\infty (W_4) \leq 3$.  Towards a contradiction suppose $f_\infty (W_4) \leq 2$. Let $d$ be the distance function on $W_4$ given in Figure~\ref{fig:bad_metric_W4} and let $q_1, \dots, q_5$ be an isometric embedding of $(G, d)$ in $ \Lpspace{\infty}{2}$.  Note that $q_1, \dots, q_4$ all lie on two consecutive sides of a square centered at $q_5$ with side length $400$.  By symmetry we may assume that $q_5=(200, -200)$, that $q_1=(x, 0)$ where $0 \leq x \leq 200$ and that $q_i(1) = 0$ or $q_i(2) = 0$ for $i \in \{2,3,4\}$.  We say that $(a,0)$ is  \emph{directly right of} $(b,0)$ if $b<a$ (in this case $(b,0)$ is \emph{directly left of} $(a,0)$),  $(0,c)$ is \emph{directly below} $(0,d)$ if $c <d$, and that $(a,0)$ and $(0,c)$ are \emph{diagonal}.   

We first consider the case that $q_4$ is directly right of $q_1$.  This implies that $q_3$ must be directly left of $q_4$ as $q_2$ would be too far from $q_1$ (if $q_3$ is directly right of $q_4$) or $q_3$ would be too far from $q_4$ (if $q_3$ and $q_4$ are diagonal).  Now, $q_2$ cannot be directly right of $q_3$ as $q_2$ would be too far from $q_1$, and $q_2$ cannot be directly left of $q_3$ as $q_2$ would be too close to $q_1$.  Thus, $q_2$ and $q_3$ are diagonal.  But now $\|q_1 - q_2\|_\infty \leq \|q_3 - q_2\|_\infty$, which is a contradiction.

We next consider the case that $q_4$ is directly left of $q_1$. Again, $q_3$ cannot be directly left of $q_4$.  Suppose that $q_3$ is directly right of $q_4$.  Again, $q_2$ cannot be directly right of or left of $q_3$. Thus, $q_2$ and $q_3$ are diagonal.  But now $\|q_2 - q_3\|_\infty \geq 20$, which is a contradiction.  Thus, $q_3$ and $q_4$ must be diagonal.   If $q_2$ is directly above or  directly below $q_3$, then $\|q_2 - q_1\|_\infty \geq 24$, which is a contradiction.  Thus, $q_2$ and $q_3$ are diagonal.  Since $d_{3,4}=20$, we must have $q_3=(-20, 0)$ or $q_4=(0,20)$.  In the first case, $\|q_2 - q_3\|_\infty \geq 20$ and in the second case $\|q_2 - q_1\|_\infty \geq 27$, both of which are contradictions. 

The remaining case is if $q_1$ and $q_4$ are diagonal.  Thus, $q_1=(24,0)$ or $q_4=(0,-24)$.  Suppose $q_1=(24,0)$.  If $q_2$ and $q_1$ are diagonal, then $\|q_1 - q_2\|_\infty \geq 24$, a contradiction.  If $q_2$ is directly right of $q_1$, then $q_3$ is too far away from $q_4$.  Thus, $q_2=(6,0)$.  Evidently, $q_3$ cannot be directly left of $q_2$.  If $q_3$ is directly right of $q_2$ we have $\|q_3 - q_4\|_\infty \geq 23$, a contradiction. If $q_3$ and $q_2$ are diagonal, then $q_3$ and $q_4$ are too close.  We finish with the subcase that $q_4=(0,-24)$.  Again, we must have $q_3=(0,-4)$.  If $q_2$ is directly below $q_3$, then $\|q_2 - q_1\|_\infty \geq 21$, a contradiction.  If $q_2$ and $q_3$ are diagonal, then $q_2 = (17,0)$ and is too close to $q_1$. This completes the subcase and the proof.
\end{proof}

\begin{lemma} \label{w4excluded}
The graph $W_4$ is an excluded minor for $f_{\infty}(G) \leq 2$. Moreover, $W_4$ is the only excluded minor for $f_{\infty}(G) \leq 2$ among all graphs with at most 5 vertices.  
\end{lemma}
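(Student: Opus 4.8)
The plan is to split the statement into two parts. First, to show $W_4$ is an excluded minor, I need $f_\infty(W_4) > 2$ (which is exactly Lemma~\ref{fw4}, giving $f_\infty(W_4) = 3$) together with the fact that every proper minor $H$ of $W_4$ satisfies $f_\infty(H) \leq 2$. For the second part I would enumerate the proper minors of $W_4$ and verify the bound. Since $f_\infty$ is minor-monotone, it suffices to check the minor-maximal proper minors of $W_4$, i.e.\ the single-edge-deletions and single-edge-contractions of $W_4$. Deleting a rim edge of $W_4$ leaves a graph with a vertex cover of size $2$ (the two hub-adjacent degree-$3$ vertices that remain, or rather: the hub together with one rim vertex), so Lemma~\ref{vertexcover} gives $f_\infty \leq 2$; deleting a spoke leaves $K_3$ with a pendant path, which has a vertex cover of size $2$ as well. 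Contracting a spoke of $W_4$ yields $W_4$ with one spoke contracted, which is a $4$-vertex graph (two triangles sharing an edge, i.e.\ $K_4$ minus an edge, plus possibly a doubled edge suppressed) — in any case a graph on at most $4$ vertices with vertex cover $2$; contracting a rim edge yields a $4$-vertex graph (the hub plus three rim vertices, with the hub adjacent to all and the three rim vertices forming a path), again with vertex cover $2$. In every case Lemma~\ref{vertexcover} closes it.

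For the "moreover" clause I would argue that if $H$ is an excluded minor for $f_\infty(G) \leq 2$ with $|V(H)| \leq 5$ and $H \neq W_4$, then $H$ is an excluded minor means $f_\infty(H) \geq 3$ but $f_\infty(H') \leq 2$ for every proper minor $H'$. By Lemma~\ref{vertexcover}, $f_\infty(H) \geq 3$ forces $\tau(H) \geq 3$. Among graphs on at most $5$ vertices, $\tau(H) \geq 3$ is quite restrictive: by König-type / direct reasoning, such an $H$ on $5$ vertices must contain $K_4$ or be fairly dense. I would then invoke the work of Sitharam and Willoughby~\cite{SW15}, who computed $f_\infty(G)$ for all graphs on at most $5$ vertices except $W_4$, to conclude that $W_4$ is the unique graph on $\le 5$ vertices with $f_\infty > 2$ that is additionally minor-minimal — equivalently, that every other $5$-vertex graph $H$ with $f_\infty(H) \geq 3$ properly contains $W_4$ as a minor. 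In fact the cleanest route is: $K_5$ contains $W_4$ as a minor, and any $5$-vertex graph with $\tau \geq 3$ either is $W_4$, or contains $W_4$ as a minor, or has $f_\infty \le 2$; one checks the short list of $5$-vertex graphs with $\tau(H)\ge 3$ and small edge sets directly, using subdivision/suppression (Lemma~\ref{subdivision}) and the vertex-cover bound to dispatch the ones that are not $W_4$.

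The main obstacle I anticipate is the "moreover" part: unlike the first part, it is genuinely a finite case analysis over $5$-vertex graphs, and doing it cleanly requires either a careful organizing principle (e.g.\ reduce to graphs with minimum degree $\geq 3$, since degree-$\le 1$ vertices are irrelevant and degree-$2$ vertices can be suppressed by Lemma~\ref{subdivision}, then note a $5$-vertex graph with $\delta \ge 3$ has at least $8$ edges and enumerate these) or an appeal to the computational results of~\cite{SW15}. Reducing to $\delta(H) \ge 3$ via Lemma~\ref{subdivision} and deleting low-degree vertices is the key simplification: it cuts the enumeration down to a handful of graphs, namely $K_5$, $K_5$ minus an edge, $K_5$ minus two edges (two cases), and $W_4$ itself, after which one shows all but $W_4$ contain $W_4$ as a proper minor or fail to be minor-minimal. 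I would present that reduction first and then handle the remaining short list by exhibiting explicit $W_4$-minors or explicit realizations.
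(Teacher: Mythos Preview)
Your Part~1 contains a genuine error: the vertex-cover bound (Lemma~\ref{vertexcover}) is not strong enough to handle the proper minors of $W_4$. Contracting a rim edge of $W_4$ yields $K_4$, and $\tau(K_4)=3$, so Lemma~\ref{vertexcover} only gives $f_\infty(K_4)\le 3$, not $\le 2$. Similarly, $W_4$ minus a rim edge still has $\tau=3$ (the three surviving rim edges form a path that needs two vertices, and no single rim vertex together with the hub covers all of them), and $W_4$ minus a spoke is not ``$K_3$ with a pendant path''---it has seven edges and again $\tau=3$. The paper instead uses the fact $f_\infty(K_4)=2$ (established earlier from the bounds $\lfloor 2n/3\rfloor\le f_\infty(K_n)\le n-2$): any proper minor of $W_4$ on at most four vertices is a minor of $K_4$, and any proper minor on five vertices comes from an edge deletion, which creates a degree-$2$ vertex that is suppressed via Lemma~\ref{degree2} or~\ref{subdivision} to land back in a graph on at most four vertices.

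For Part~2, your reduction to $\delta(H)\ge 3$ followed by enumeration can be made to work (indeed the only $5$-vertex graphs with $\delta\ge 3$ are $K_5$, $K_5-e$, and $W_4$, the first two containing $W_4$ properly), but the paper's route is shorter and needs no case analysis and no appeal to~\cite{SW15}: it observes that every $5$-vertex graph with no $W_4$ minor is a subgraph of $K_5-\{ab,ac\}$ for some vertex $a$ (since $K_5$, $K_5-e$, and $K_5$ minus a $2$-matching all contain $W_4$), and then a single application of Lemma~\ref{degree2} gives $f_\infty(K_5-\{ab,ac\})=f_\infty(K_4)=2$, because $a$ has degree~$2$ and its two neighbours are adjacent.
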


\begin{proof}
By the previous lemma, $f_\infty(W_4) = 3$, so to prove that $W_4$ is an excluded minor it suffices to show that every proper minor $H$ of $W_4$ satisfies $f_\infty(H) \leq 2$. 
If $|V(H)| \leq 4$, then $f_\infty(H) \leq 2$ since $f_\infty(K_4) \leq 2$. Now, say $H$ is obtained from $W_4$ by only deleting edges. Deleting an edge yields a degree-$2$ vertex, which we can suppress by either Lemma \ref{subdivision} or Lemma \ref{degree2}.  Again, we get a graph with at most 4 vertices, so we are done.  

For the second part, let $H$ be an excluded minor for $f_\infty(G) \leq 2$  with $|V(H)|\le 5$. If $H$ has a $W_4$ minor, then $H=W_4$.  So we may assume that $H$ has no $W_4$ minor.  Let $e=ab$ and $f=ac$ be edges of $K_5$. 
By Lemma \ref{degree2} we have that  $f_{\infty}(K_5-\{e,f\})=f_{\infty}(K_4) = 2$. Since $H$ has no $W_4$ minor, this implies that $H$ is a minor of $K_5-\{e,f\}$.  But then, $f_\infty (H) \leq f_{\infty}(K_5-\{e,f\})=2$, which is a contradiction. 
\end{proof}

\begin{figure}[t]
\centering
\begin{tikzpicture}[scale=1.75,inner sep=1.5pt]
\definenodes
\draw[thick] (0) -- node[fill=white,inner sep=1pt,midway]{\tiny $71$} (2);
\draw[thick] (1) -- node[fill=white,inner sep=1pt,midway]{\tiny $53$} (2);
\draw[thick] (0) -- node[fill=white,inner sep=1pt,midway]{\tiny $77$} (3);
\draw[thick] (1) -- node[fill=white,inner sep=1pt,midway]{\tiny $88$} (3);
\draw[thick] (2) -- node[fill=white,inner sep=1pt,midway]{\tiny $78$} (3);
\draw[thick] (0) -- node[fill=white,inner sep=1pt,midway]{\tiny $74$} (4);
\draw[thick] (1) -- node[fill=white,inner sep=1pt,midway]{\tiny $79$} (4);
\draw[thick] (0) -- node[fill=white,inner sep=1pt,midway]{\tiny $46$} (5);
\draw[thick] (1) -- node[fill=white,inner sep=1pt,midway]{\tiny $36$} (5);
\draw[thick] (4) -- node[fill=white,inner sep=1pt,midway]{\tiny $79$} (5);
\end{tikzpicture}
\caption{$K_4 +_e K_4$ and a distance function  that cannot be realized in $ \Lpspace{\infty}{2}$.}
\label{fig:bad_metric}
\end{figure}
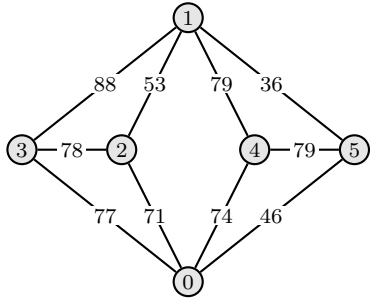

\begin{lemma}\label{lem:k4+k4}
We have that 
$f_\infty (K_4 +_e K_4)=3$.
\end{lemma}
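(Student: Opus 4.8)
The plan is to prove the two bounds $f_\infty(K_4 +_e K_4) \le 3$ and $f_\infty(K_4 +_e K_4) \ge 3$ separately.

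For the upper bound I would first isolate the following general $2$-sum inequality: if $e$ is an edge of both $G_1$ and $G_2$, then $f_\infty(G_1 +_e G_2) \le f_\infty(G_1) + f_\infty(G_2) - 1$. Applied with $G_1 = G_2 = K_4$ and $f_\infty(K_4) = 2$, this gives $f_\infty(K_4 +_e K_4) \le 3$. To prove the inequality, write $G := G_1 +_e G_2$ with $e = uv$, fix a distance function $d$ on $G$, and let $\delta$ be the length of a shortest $u$ to $v$ path in $(G,d)$. Set $d_i := d$ on $E(G_i) \setminus \{e\}$ and $d_i(e) := \delta$; since every $u$ to $v$ walk inside $G_i$ is also one in $G$, each $d_i$ is a distance function on $G_i$ whose $u$ to $v$ distance equals $\delta$. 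By Lemma~\ref{lem:feasible} there are feasible sets $F^i_1, \dots, F^i_{k_i}$ of $(G_i, d_i)$ covering $E(G_i)$ (where $k_i := f_\infty(G_i)$), with $e \in F^i_1$, witnessed by potentials $p^i_j$ on $D(G_i, d_i)$. Since $e \in F^i_1$ is tight, $|p^i_1(u) - p^i_1(v)| = \delta$, and after possibly negating $p^1_1$ or $p^2_1$ we may assume $p^1_1(v) - p^1_1(u) = \delta = p^2_1(v) - p^2_1(u)$; then $p^1_1$ on $V(G_1)$ together with the translate $p^2_1 + (p^1_1(u) - p^2_1(u))$ on $V(G_2)$ defines a single potential on $D(G,d)$ that makes $F_1 := (F^1_1 \cup F^2_1) \setminus \{e\}$ tight, so $F_1$ is feasible. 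For $j \ge 2$, the potential $p^1_j$ satisfies $|p^1_j(u) - p^1_j(v)| \le d_1(e) = \delta$, which is at most the $u$ to $v$ distance in $(G_2 - e, d)$; so by Theorem~\ref{thm:potential} it extends, without changing its values on $V(G_1)$, to a potential on $D(G,d)$ keeping $F^1_j \setminus \{e\}$ tight, and symmetrically for the $F^2_j$. These $k_1 + k_2 - 1$ feasible sets cover $E(G) = (E(G_1) \cup E(G_2)) \setminus \{e\}$, so Lemma~\ref{lem:feasible} finishes the upper bound.

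For the lower bound I would take the distance function $d$ of Figure~\ref{fig:bad_metric} (which one checks is generic) and show that $(K_4 +_e K_4, d)$ has no realization in $\ell_\infty^2$. Suppose it does. By Lemma~\ref{lem:feasible} there are feasible sets $F_1, F_2$ with $F_1 \cup F_2 = E(G)$; genericity makes both acyclic, and since $F_1 \cup F_2$ has $10$ edges while each $F_i$, being a forest on the $6$ vertices, has at most $5$ edges, $F_1$ and $F_2$ are complementary spanning trees. Writing $A$ and $B$ for the two copies of $K_4 - e$ (on $\{0,1,2,3\}$ and $\{0,1,4,5\}$), a short argument shows that one of $F_1, F_2$ meets $A$ in a Hamiltonian path of $\{0,1,2,3\}$ and meets $B$ in two edges attaching $4$ and $5$ to $\{0,1\}$, while the other meets $B$ in a Hamiltonian path of $\{0,1,4,5\}$ and meets $A$ in two edges attaching $2$ and $3$ to $\{0,1\}$ (a star centred at a private vertex is impossible, since the two complementary edges inside that copy would then close a cycle together with the $0$ to $1$ path supplied by the other copy). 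Up to the symmetry group of $K_4 +_e K_4$ this leaves only a few candidate partitions $\{F_1, F_2\}$.

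Each candidate I would eliminate via Theorem~\ref{thm:potential}: a spanning tree $T$ is feasible for $d$ if and only if some orientation $\overrightarrow{T}$ leaves $(G, l(d,\overrightarrow{T}))$ without a negative directed cycle. The key local constraint is that whenever $T$ contains two edges $xy, xz$ of a triangle of $G$, any feasible orientation must orient both towards $x$ or both away from $x$ --- otherwise the directed triangle closed by the non-tree edge $yz$ has length $d_{yz} - d_{xy} - d_{xz}$, which is negative by the (strict, since $d$ is generic) triangle inequality. Imposing this at the four triangles $\{0,2,3\}$, $\{1,2,3\}$, $\{0,4,5\}$, $\{1,4,5\}$ pins down the admissible orientations of $F_1$ and $F_2$ tightly enough that, once the numerical values of Figure~\ref{fig:bad_metric} are substituted, one of $F_1, F_2$ inevitably carries a negative directed cycle supported on a short cycle of $G$, a contradiction. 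I expect the main obstacle to be exactly this final step: verifying that the metric of Figure~\ref{fig:bad_metric} really defeats every one of the candidate partitions --- a finite but somewhat delicate check, confirming that the displayed numbers were chosen correctly. An alternative worth trying would be to replace the enumeration by a direct $\ell_\infty^2$-geometry argument in the spirit of the proof of Lemma~\ref{fw4}, forcing the pair $\{q_0, q_1\}$ into positions that the two halves of the graph cannot simultaneously accommodate.
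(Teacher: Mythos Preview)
Your upper bound argument is genuinely different from the paper's and more conceptual: you isolate the general inequality $f_\infty(G_1 +_e G_2) \le f_\infty(G_1) + f_\infty(G_2) - 1$, whereas the paper gives an ad hoc argument specific to $K_4 +_e K_4$ (it covers $E(G)$ by the two stars at $0$ and at $1$ together with the matching $\{23,45\}$, and when that matching is infeasible falls back on maximal feasible sets containing the stars at $3$ and at $5$). Your gluing of the two ``$e$-tight'' potentials and the extension of the remaining ones via Theorem~\ref{thm:potential} are correct; the general $2$-sum lemma is a nice bonus the paper does not state.

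For the lower bound your plan coincides with the paper's: take the metric of Figure~\ref{fig:bad_metric}, force $F_1, F_2$ to be complementary spanning trees, and eliminate all candidate pairs by exhibiting negative cycles. Your observation that $T_L$ cannot be a star at $2$ or $3$ (since then the two complementary $A$-edges form $\{0x,1x\}$, which closes a cycle with the $0$--$1$ path supplied by the $B$-side of $T_2$) is exactly what cuts the number of possibilities for $T_L$ from $8$ down to the paper's $6$; the paper leaves this reduction implicit. Two small cautions. First, your description of $T_R$ as ``two edges attaching $4$ and $5$ to $\{0,1\}$'' is slightly misleading: four of the six surviving options contain the edge $45$ (e.g.\ $\{45,04\}$), so $T_R$ need not consist of two $\{4,5\}$--$\{0,1\}$ edges. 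Second, invoking ``the symmetry group of $K_4 +_e K_4$'' to reduce to ``only a few'' partitions does not work here, because the chosen metric $d$ breaks all nontrivial automorphisms---the paper really does check all $6 \times 6 = 36$ pairs $(T_L,T_R)$, using the triangle constraints $\Delta(a,b,c)$ you describe together with a further list of $22$ hand-built infeasible orientations (Figure~\ref{forbiddenconfig}). So your ``finite but somewhat delicate check'' is exactly the paper's $36$-cell table; nothing is missing from your outline, but the symmetry shortcut is not available.
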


\begin{proof}
To simplify notation, throughout this proof we  set  $G:=K_4 +_e K_4$. Furthermore, we  use the labeling  of the nodes of $G$ given in Figure  \ref{fig:bad_metric}. 

We first show that $f_\infty(G) \leq 3$.  Let $d$ be an arbitrary distance function on $G$.  Note that $F_0=\{02,03,04,05\}$ and $F_1=\{12,13,14,15\}$ are feasible sets because they are stars.  Thus, if $\{23,45\}$ is feasible, then $(G,d)$ can be realized in $\ell_{\infty}^3$ by Lemma \ref{lem:feasible}.
To conclude the proof  assume that $\{23,45\}$ is not feasible. Note that $F_3=\{30,31,32\}$ and $F_5=\{50, 51,54\}$ are feasible because they are stars. Let $F_3'$ and $F_5'$ be maximal feasible sets containing $F_3$ and $F_5$ respectively.  By Lemma \ref{lem:feasible_contains_tree}, $F_3'$ and $F_5'$ each span all the vertices of $G$.  Therefore, since $\{23,45\}$ is not feasible, we must have $\{02,12\} \cap F_5' \neq \emptyset$ and $\{04,14\} \cap F_3' \neq \emptyset$.  Let $F:=E(G) \setminus (F_3' \cup F_5')$.  Thus, $F$ is a subset of $\{02,04\}, \{12,14\}, \{02,14\}$, or $\{12,04\}$.  In the first two cases, $F$ is feasible since it is a subset of a star.  In the last two cases, note $\{(0,2), (4,1)\}$ and $\{(1,2), (4,0)\}$ are feasible orientations of $\{02,14\}$ and $\{12,04\}$, respectively. Hence, $F$ is also feasible in the last two cases. Since $F_3'$, $F_5'$ and $F$ are feasible sets covering all the edges of $G$, Lemma~\ref{lem:feasible} yields $f_\infty(G) \leq 3$. 

To  show that $f_\infty(G)=3$ it remains to exhibit a distance function  $d$ on $G$ such that $(G,d)$ is not realizable in $\ell_\infty^2$.  We exhibit such a distance function in Figure \ref{fig:bad_metric}.  Towards a contradiction, suppose that $E(G)$ can be partitioned into two feasible sets $T_1$ and $T_2$.  It is easy to check that $d$ is a generic distance function, and so $T_1$ and $T_2$ are both forests.\footnote{If one does not want to check genericity, simply perturb $d$ to a nearby generic distance function.} Thus, $|T_1|, |T_2| \leq |V(G)|-1=5$ edges.  Since $|E(G)|=10$, we conclude that $T_1$ and $T_2$ are both spanning trees. Let $T_L$ and $T_R$ be the subgraphs of $T_1$ induced by $\{0,1,2,3\}$ and $\{0,1,4,5\}$, respectively. By interchanging $T_1$ and $T_2$, we may assume that $|E(T_L)|=3$.  Therefore, there are six possibilities for each of $T_L$ and $T_R$, and these are shown in Figure \ref{prooftable}.  The six possibilities for $T_L$ are shown along the first column of the table, and the six possibilities for $T_R$ are shown along the first~row.  

We rule out each of the $36$ possibilities for $T_1$ by showing that at least one of $T_1$ or $T_2$ is infeasible. To do this, we show that for all orientations $\overrightarrow{T_1}$ and $\overrightarrow{T_2}$ of $T_1$ and $T_2$, at least one of $\overrightarrow{T_1}$ or $\overrightarrow{T_2}$ contains an infeasible orientation.  

If $abc$ forms a triangle in $G$, note that $\{(a,b), (b,c)\}$ is an infeasible orientation. Indeed, the triangle inequality combined with the fact that $d$ is generic imply that   the directed cycle $(a,b,c)$ is negative.  We denote this infeasible orientation as $\Delta(a,b,c)$.  In Figure~\ref{forbiddenconfig}, we list more infeasible orientations that do not come from triangles.  These infeasible orientations consist only of the oriented arcs in each picture.  However, for the benefit of the reader, we have included dashed edges to indicate the negative cycle in $D(G,d)$.  

The remainder of the proof is summarized in Figure \ref{prooftable}.  Each entry in the table gives the infeasible orientations to apply in order to obtain a contradiction.  For example, consider the fourth row of the table.  For this entire row, it suffices to only consider the edges in $E(T_L)$.  By symmetry, we may assume that $(0,2) \in \overrightarrow{T_L}$.  Next, $\Delta(3,0,2)$ implies that $(0,3) \in \overrightarrow{T_L}$.  Then, $A2$ implies $(1,3) \in \overrightarrow{T_L}$.  Since $(1,3), (0,2) \in \overrightarrow{T_L}$, we contradict $A1$.  Thus, $\Delta(3,0,2), A1$, and $A2$ are sufficient to derive a contradiction.  Sometimes the infeasible orientations need to be applied to $T_2$ instead of to $T_1$, in which case we have specified so.  
\end{proof}

\begin{figure}
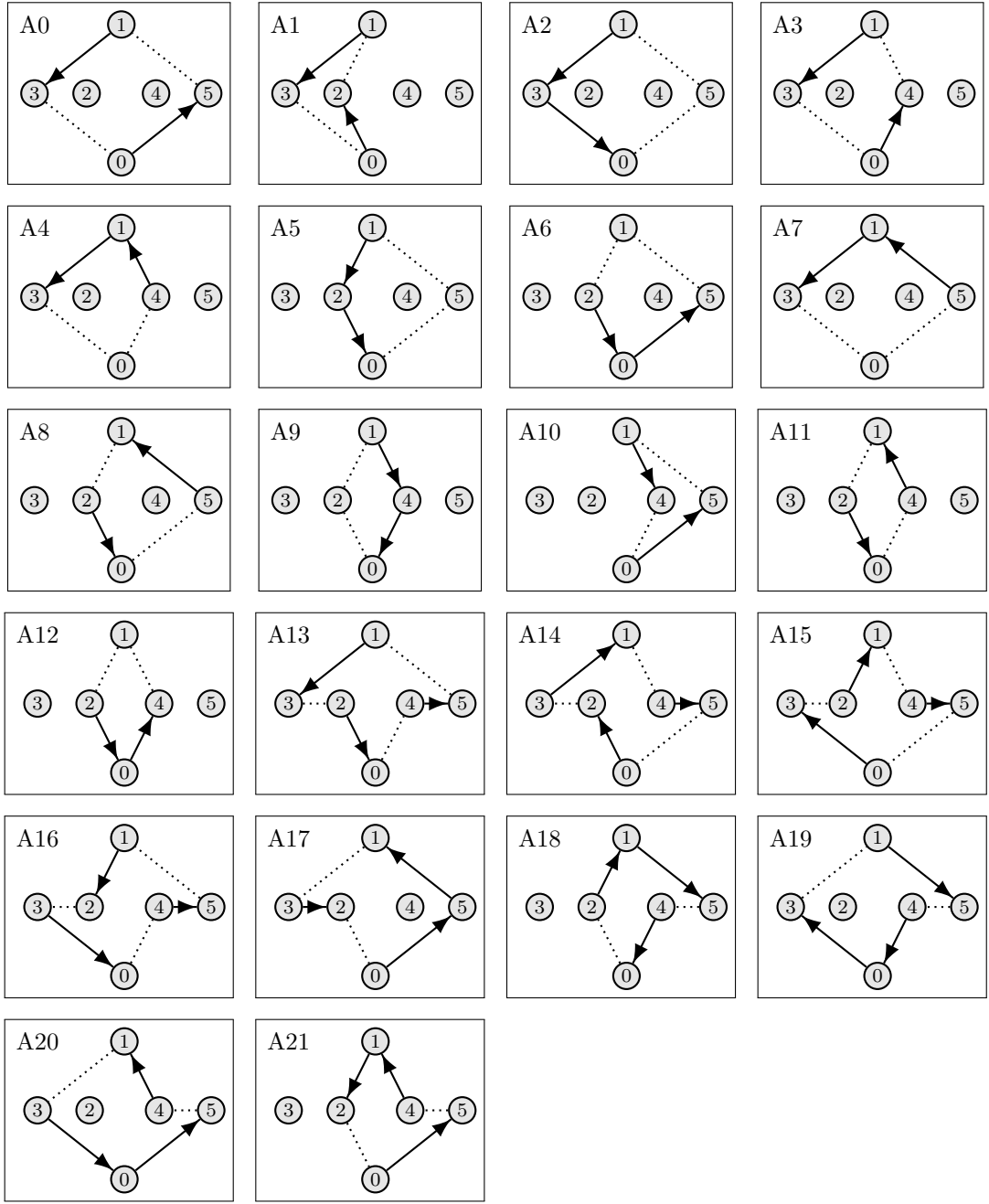

\centering
\begin{tabular}{c@{\hspace{.75em}}c@{\hspace{.75em}}c@{\hspace{.75em}}c}
\drawdirectedaxiom{(1)--(3)}{(0)--(5)}[(5)--(1)][(3)--(0)] 
&\drawdirectedaxiom{(1)--(3)}{(0)--(2)}[(3)--(0)][(2)--(1)] 
&\drawdirectedaxiom{(1)--(3)}{(3)--(0)}[(0)--(5)][(5)--(1)] 
&\drawdirectedaxiom{(1)--(3)}{(0)--(4)}[(3)--(0)][(4)--(1)]\\[1.5ex] 
\drawdirectedaxiom{(1)--(3)}{(4)--(1)}[(3)--(0)][(0)--(4)] 
&\drawdirectedaxiom{(1)--(2)}{(2)--(0)}[(0)--(5)][(5)--(1)] 
&\drawdirectedaxiom{(2)--(0)}{(0)--(5)}[(5)--(1)][(1)--(2)] 
&\drawdirectedaxiom{(5)--(1)}{(1)--(3)}[(3)--(0)][(0)--(5)]\\[1.5ex] 
\drawdirectedaxiom{(5)--(1)}{(2)--(0)}[(1)--(2)][(0)--(5)]
&\drawdirectedaxiom{(1)--(4)}{(4)--(0)}[(0)--(2)][(2)--(1)] 
&\drawdirectedaxiom{(1)--(4)}{(0)--(5)}[(4)--(0)][(5)--(1)] 
&\drawdirectedaxiom{(4)--(1)}{(2)--(0)}[(1)--(2)][(0)--(4)]\\[1.5ex] 
\drawdirectedaxiom{(2)--(0)}{(0)--(4)}[(4)--(1)][(1)--(2)] 
&\drawdirectedaxiom{(1)--(3)}{(2)--(0)}{(4)--(5)}[(3)--(2)][(0)--(4)][(5)--(1)] 
&\drawdirectedaxiom{(3)--(1)}{(0)--(2)}{(4)--(5)}[(2)--(3)][(1)--(4)][(5)--(0)] 
&\drawdirectedaxiom{(0)--(3)}{(2)--(1)}{(4)--(5)}[(1)--(4)][(5)--(0)][(3)--(2)]\\[1.5ex] 
\drawdirectedaxiom{(1)--(2)}{(3)--(0)}{(4)--(5)}[(2)--(3)][(0)--(4)][(5)--(1)] 
&\drawdirectedaxiom{(3)--(2)}{(0)--(5)}{(5)--(1)}[(1)--(3)][(2)--(0)] 
&\drawdirectedaxiom{(2)--(1)}{(1)--(5)}{(4)--(0)}[(5)--(4)][(0)--(2)] 
&\drawdirectedaxiom{(4)--(0)}{(0)--(3)}{(1)--(5)}[(3)--(1)][(5)--(4)]\\[1.5ex] 
\drawdirectedaxiom{(4)--(1)}{(3)--(0)}{(0)--(5)}[(1)--(3)][(5)--(4)] 
&\drawdirectedaxiom{(4)--(1)}{(1)--(2)}{(0)--(5)}[(2)--(0)][(5)--(4)] 
\end{tabular}
\caption{Infeasible Orientations A0--A21.}
\label{forbiddenconfig}
\end{figure}

\begin{figure} 
\begin{center}
\begin{tabular}{|c||c|c|c|c|c|c|}
\hline
&\drawcaseright{(1)--(5)}{(4)--(0)}
&\drawcaseright{(1)--(4)}{(5)--(0)}
&\drawcaseright{(4)--(5)}{(4)--(0)}
&\drawcaseright{(4)--(5)}{(4)--(1)}
&\drawcaseright{(4)--(5)}{(5)--(0)}
&\drawcaseright{(4)--(5)}{(5)--(1)}\\\hline\hline
\drawcaseleft{(1)--(3)}{(3)--(2)}{(2)--(0)}
&\writeproof*[0,5,4][5,4,1]{A20, A21}{A15}
&\writeproof*[0,4,5][4,5,1]{A7, A19}{A16}
&\writeproof[0,2,3][1,3,2]{A13, A14}
&\writeproof[0,2,3][1,3,2]{A13, A14}
&\writeproof[0,2,3][1,3,2]{A13, A14}
&\writeproof[0,2,3][1,3,2]{A13, A14}\\[0ex]\hline
\drawcaseleft{(1)--(2)}{(2)--(3)}{(3)--(0)}
&\writeproof*[0,5,4][5,4,1]{A0, A6}{A14}
&\writeproof*[0,5,4][5,4,1]{A7, A6}{A14}
&\writeproof[0,3,2][1,2,3]{A15, A16}
&\writeproof[0,3,2][1,2,3]{A15, A16}
&\writeproof[0,3,2][1,2,3]{A15, A16}
&\writeproof[0,3,2][1,2,3]{A15, A16}\\[0ex]\hline
\drawcaseleft{(1)--(2)}{(1)--(3)}{(3)--(0)}
&\writeproof[2,1,3]{A2, A7}{A3, A19}
&\writeproof[2,1,3]{A2, A4}{A20, A10}
&\writeproof*[0,2,3][4,1,5]{A11, A6}{A7}
&\writeproof*[0,2,3][0,5,1][4,0,5]{A8, A6}
&\writeproof*[0,2,3][4,1,5]{A8, A9}{A12}
&\writeproof*[4,0,5]{A9,A10}\\[0ex]\hline
\drawcaseleft{(0)--(2)}{(0)--(3)}{(1)--(3)}
&\writeproof[3,0,2]{A1, A2}
&\writeproof[3,0,2]{A1, A2}
&\writeproof[3,0,2]{A1, A2}
&\writeproof[3,0,2]{A1, A2}
&\writeproof[3,0,2]{A1, A2}
&\writeproof[3,0,2]{A1, A2}\\[0ex]\hline
\drawcaseleft{(1)--(2)}{(1)--(3)}{(2)--(0)}
&\writeproof{A5, A8}{A12, A18}
&\writeproof{A5, A6}{A10, A20}
&\writeproof{A1, A13}{A14}
&\writeproof{A1, A13}{A14}
&\writeproof{A1, A13}{A14}
&\writeproof{A1, A13}{A14}\\[0ex]\hline
\drawcaseleft{(0)--(2)}{(0)--(3)}{(1)--(2)}
&\writeproof{A5, A8}{A12, A18}
&\writeproof{A5, A6}{A10, A20}
&\writeproof*[1,3,2]{A0, A7}{A17}
&\writeproof*[1,3,2]{A0, A7}{A17}
&\writeproof*{A3, A4}{A9}
&\writeproof*{A3, A4}{A9}\\[0ex]\hline
\end{tabular}
\end{center}
\caption{Proofs for all $36$ possibilities for $T_1$.}
\label{prooftable}
\end{figure}

\begin{lemma} \label{k4plusk4}
The graph $K_4 +_e K_4$ is an excluded minor for $f_{\infty}(G) \leq 2$. 
\end{lemma}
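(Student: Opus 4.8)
The plan is to mimic the structure of the proof that $W_4$ is an excluded minor (Lemma~\ref{w4excluded}). We already know from Lemma~\ref{lem:k4+k4} that $f_\infty(K_4 +_e K_4) = 3 > 2$, so it remains to check that every \emph{proper} minor $H$ of $G := K_4 +_e K_4$ satisfies $f_\infty(H) \le 2$. Since $f_\infty$ is minor monotone, it suffices to check this for the minors obtained from $G$ by a single edge deletion, a single edge contraction, or a single vertex deletion; every proper minor is a minor of one of these.

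First I would handle vertex deletions and contractions. Deleting either of the two degree-$3$ vertices (those incident to the deleted edge $e$, i.e. vertices $0$ and $1$ in the labeling of Figure~\ref{fig:bad_metric}) leaves a graph on $5$ vertices that is a proper subgraph of $W_4$ or $K_4$ plus pendant-type structure — in any case a graph with at most $5$ vertices and no $W_4$ minor, hence by Lemma~\ref{w4excluded} it has $f_\infty \le 2$. Deleting one of the four degree-$3$ vertices of the two $K_4$'s (vertices $2,3,4,5$) leaves a graph on $5$ vertices, again with no $W_4$ minor (one can check directly it is a minor of $K_4$ with a pendant edge, which by Lemma~\ref{subdivision}/Lemma~\ref{degree2} has $f_\infty = f_\infty(K_4) = 2$). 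Contracting an edge of $G$ always produces a graph on at most $5$ vertices; since $G$ itself has no $W_4$ minor (it is planar of tree-width $3$, and one checks $W_4$ is not a minor — indeed any $W_4$-minor would need $5$ branch sets using only $6$ vertices with the right adjacency pattern, which $G$ lacks), no contraction minor contains $W_4$ either, so Lemma~\ref{w4excluded} again gives $f_\infty \le 2$.

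Next I would handle edge deletions $H = G - a$ for $a \in E(G)$. If $a = 23$ or $a = 45$, then $H$ has a degree-$2$ vertex ($3$ or $4$ respectively), which we suppress using Lemma~\ref{subdivision} or Lemma~\ref{degree2}; the resulting graph has $5$ vertices and is readily seen to be a minor of $K_5$ minus two adjacent edges, hence (by Lemma~\ref{degree2}) has $f_\infty = 2$. If $a$ is one of the eight ``spoke'' edges incident to $0$ or $1$, say $a = 02$, then vertex $2$ now has degree $2$; suppress it via Lemma~\ref{subdivision} to get a $5$-vertex graph which is a proper subgraph of $K_4 +_e K_4$ on fewer edges and has no $W_4$ minor, so $f_\infty \le 2$ by Lemma~\ref{w4excluded}. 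By the obvious symmetries of $G$ (swapping the two $K_4$ halves, and swapping $0 \leftrightarrow 1$) this covers all eight spoke deletions.

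The main obstacle — really the only place where care is needed — is to be certain that $K_4 +_e K_4$ itself (and hence all of its contraction minors) contains no $W_4$ minor, so that Lemma~\ref{w4excluded} can be invoked as a black box; this is a small finite check but must be done correctly. An alternative that avoids relying on the ``$W_4$ is the only excluded minor on $\le 5$ vertices'' half of Lemma~\ref{w4excluded} is to note that every proper minor of $G$ either has a vertex of degree $\le 2$ (suppress it and recurse down to $\le 4$ vertices, where $f_\infty \le f_\infty(K_4) = 2$) or is a minor of $K_4$ plus isolated structure; I would present the degree-based argument as the primary route since it is self-contained and mirrors the $W_4$ case almost verbatim.
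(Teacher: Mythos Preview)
Your approach is essentially the same as the paper's: reduce every proper minor to a $5$-vertex graph (either by contracting an edge directly, or by deleting an edge and then suppressing the resulting degree-$2$ vertex via Lemmas~\ref{subdivision}/\ref{degree2}), and then invoke the second half of Lemma~\ref{w4excluded}. Two small remarks. First, you mis-identify the degrees: in $K_4 +_e K_4$ the vertices $0,1$ incident to the deleted edge have degree~$4$, not~$3$, while $2,3,4,5$ have degree~$3$; this does not damage the argument since the vertex-deletion cases are anyway subsumed by edge deletions. Second, the paper replaces your case analysis (and your unproved assertion that $G$ itself has no $W_4$ minor) by a single uniform observation: each of the $5$-vertex graphs arising above fails to be $3$-connected (the $2$-separation at $\{0,1\}$ persists), and since $W_4$ is $3$-connected on $5$ vertices it cannot occur as a minor of a non-$3$-connected $5$-vertex graph. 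Using that observation streamlines your write-up considerably.
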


\begin{proof}
By the previous lemma, $f_\infty(K_4 +_e K_4) = 3$, so it suffices to show that every proper minor $H$ of $K_4 +_e K_4$ satisfies $f_\infty(H) \leq 2$. 
Contracting an edge of $K_4 +_e K_4$ yields a $5$-vertex graph which is not $3$-connected. 
In particular, the latter graph does not have $W_4$ as a minor. 
We are done in this case, since by Lemma \ref{w4excluded}, $W_4$ is the only excluded minor for $f_{\infty}(G) \leq 2$ among graphs on at most $5$ vertices.

Deleting an edge from $K_4 +_e K_4$ creates a degree-$2$ vertex, which we can suppress by either Lemma \ref{subdivision} or Lemma \ref{degree2}. 
We then conclude as above, since  the resulting $5$-vertex graph is not $3$-connected and thus does not contain a $W_4$ minor. 
\end{proof}

\section{Proof of the Main Theorem} \label{sec:main}

The {\em wheel} on $n+1$ vertices, denoted by $W_n$, is the  graph obtained by adding a universal vertex to an $n$-cycle.
If $G$ and $G'$ are graphs such that $G=G' \backslash e$, we say that $G'$ is obtained from 
$G$ by \emph{adding an edge}.  Let $v \in V(G)$ with $\degree_G(v) \geq 4$.  By \emph{splitting $v$} we mean the operation of first deleting $v$, and then adding two new adjacent vertices $v_1$ and $v_2$, where
each neighbour of $v$ in $G$ is adjacent to exactly one of $v_1$ and $v_2$, and $v_1$ and $v_2$ have degree at least three in the new graph.

We require the following classic theorem of Tutte \cite{tutte}.

\begin{theorem}\label{thm:tutte}
(Tutte's wheel theorem)
Every 3-connected graph is obtained from a wheel by adding edges and splitting vertices.
\end{theorem}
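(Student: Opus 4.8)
The plan is to establish both directions of this classical theorem of Tutte~\cite{tutte}; since it is standard, I will only sketch the argument.

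\emph{The three operations produce only 3-connected graphs.} Every wheel $W_n$ with $n \geq 3$ is 3-connected, and adding an edge to a 3-connected graph keeps it 3-connected. If $G'$ is obtained from a 3-connected graph $G$ by splitting a vertex $v$ of degree $\geq 4$ into adjacent vertices $v_1, v_2$, each of degree $\geq 3$, then $G'$ is 3-connected: a hypothetical 2-cut $S$ of $G'$ with $S = \{v_1, v_2\}$ would give the cut $G' - S = G - v$, which is connected; and if $|S \cap \{v_1, v_2\}| \leq 1$, then identifying $v_1 v_2$ turns $S$ into a cutset of $G$ of size $\leq 2$, impossible. The one delicate sub-case, $v_i \in S$ with $v_{3-i}$ having neighbours on both sides of $S$, is handled by observing that $\{v\} \cup (S \setminus \{v_i\})$ is then a 2-cut of $G$.

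\emph{Every 3-connected graph arises this way.} I would induct on $|V(G)| + |E(G)|$, the base case being $K_4 = W_3$, the unique smallest 3-connected graph. For the inductive step, let $G$ be 3-connected but not a wheel; the heart of the matter is to produce an edge $e$ such that either $G \setminus e$ is 3-connected, or $e = uv$ with $u, v$ having no common neighbour and $G/e$ is 3-connected (so the contracted vertex automatically has degree $\geq 4$). In the first case $G = (G \setminus e) + e$ and we induct on $G \setminus e$; in the second case $G$ is recovered from $G/e$ by splitting the contracted vertex back into $u$ and $v$, and we induct on $G/e$. (If the ends of a contractible $e$ do share neighbours, one instead splits $G/e$ to obtain a 3-connected spanning subgraph of $G$ and re-adds the missing common-neighbour edges, all intermediate graphs staying 3-connected by the previous paragraph.)

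\emph{The key claim and the main obstacle.} It remains to produce such an edge in a 3-connected non-wheel $G$. I would argue by contradiction. For each edge $e = uv$, since $G$ is 3-connected but $G \setminus e$ is not, $G \setminus e$ has a 2-cut $\{x, y\}$; one checks $\{x,y\}$ is disjoint from $\{u,v\}$ and separates $u$ from $v$, so $V(G)$ splits as $A_e \sqcup \{x, y\} \sqcup B_e$ with $u \in A_e$, $v \in B_e$, and $e$ the only edge of $G$ between $A_e$ and $B_e$. Over all edges and all such partitions, choose one minimising the size of the smaller part $A$. A standard uncrossing argument — if $|A| \geq 2$, take an edge $g$ with both ends in $A$ and combine the 2-cut witnessing that $G \setminus g$ is not 3-connected with $\{x,y\}$, using submodularity of vertex cuts, to get a strictly smaller torn-off side for $g$, contradicting minimality — forces $|A| = 1$, so $A = \{u\}$ with $\deg_G(u) = 3$ and $N(u) = \{v, x, y\}$. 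A final local analysis around the degree-$3$ vertex $u$, using that none of $uv$, $ux$, $uy$ yields a good contraction, then forces $G$ to be a wheel, a contradiction. The main obstacle is precisely this key claim: the uncrossing step and the concluding wheel-recognition step are the technical core of Tutte's theorem, and I would carry them out following his original proof (or a standard reference on graph connectivity), working throughout with inclusion-minimal separations.
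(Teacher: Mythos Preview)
The paper does not prove Theorem~\ref{thm:tutte}: it is quoted as the classical result of Tutte, cited to~\cite{tutte}, and invoked without proof as a black box (its only use is the one-line derivation of Theorem~\ref{nowheel}). There is therefore nothing in the paper to compare your attempt against. Your outline follows the standard approach---show that edge-addition and vertex-splitting preserve $3$-connectivity, then prove that every $3$-connected non-wheel has a non-essential edge via a minimal-separation/uncrossing argument---and is a reasonable sketch of Tutte's own proof, but for the purposes of this paper no proof was expected here.
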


The following characterization of graphs without a $W_4$ minor is well known. For the convenience of the reader, we give a quick proof via Theorem \ref{thm:tutte}.

\begin{theorem} \label{nowheel}
The only $3$-connected graph with no $W_4$ minor is $K_4$. 
\end{theorem}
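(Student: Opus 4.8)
The plan is to deduce Theorem~\ref{nowheel} directly from Tutte's wheel theorem (Theorem~\ref{thm:tutte}). By that theorem, every $3$-connected graph $G$ is built starting from some wheel $W_n$ by a sequence of edge additions and vertex splits. So I would first observe that if $G$ is $3$-connected and $G \neq K_4$, then either $G$ itself is a wheel $W_n$ with $n \geq 4$, or $G$ is obtained from a $3$-connected graph $G'$ by an edge addition or a vertex split with $G'$ still $3$-connected and having strictly fewer vertices or edges. The base case $W_n$ with $n \geq 4$ contains $W_4$ as a minor: contract $n-4$ consecutive rim edges of $W_n$ to reduce the rim cycle to a $4$-cycle, keeping the hub universal, which yields exactly $W_4$.

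For the inductive step I would argue that both operations used in Tutte's theorem preserve the property of having a $W_4$ minor, in the direction we need: if $G$ is obtained from $G'$ by adding an edge, then $G'$ is a subgraph of $G$, so any $W_4$ minor of $G'$ is a $W_4$ minor of $G$; and if $G$ is obtained from $G'$ by splitting a vertex $v$ into $v_1v_2$, then contracting the edge $v_1v_2$ in $G$ recovers $G'$, so again a $W_4$ minor of $G'$ lifts to one of $G$. Hence, tracing back the construction sequence of $G$ to its starting wheel $W_n$: if that wheel is $W_4$ or larger we are done immediately by the base case together with minor-monotonicity along the construction; the only way to avoid a $W_4$ minor is for the starting wheel to be $W_3 = K_4$ and for \emph{no} operation in the sequence to create a $W_4$ minor either. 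So it remains to check that the first nontrivial $3$-connected graph reachable from $K_4$ by these operations, other than $K_4$ itself, already has a $W_4$ minor — equivalently, that every $3$-connected graph on $5$ vertices has a $W_4$ minor. The $3$-connected graphs on $5$ vertices are $W_4 = K_5 - C_4$ (wait, rather $W_4$ itself), $K_5$ minus a perfect-matching-like set, and $K_5$; each of these either is $W_4$, contains it as a subgraph, or yields it after deleting edges, so the claim holds. Actually the cleanest phrasing: any $3$-connected graph $G \neq K_4$ has at least $5$ vertices, and the construction sequence from Tutte's theorem must at some point produce a $3$-connected graph on exactly $5$ vertices (the first time $|V|$ exceeds $4$, via a vertex split of $K_4$), and one checks directly that $W_4$ is the unique $3$-connected graph on $5$ vertices up to the three edge-deletion classes, all containing a $W_4$ minor.

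The main obstacle, and where I would spend the most care, is the small finite check that every $3$-connected graph on $5$ vertices contains a $W_4$ minor. This is genuinely finite and short: a $3$-connected graph on $5$ vertices has minimum degree $\geq 3$, hence at least $\lceil 15/2 \rceil = 8$ edges, i.e. it is $K_5$ minus at most two edges; the cases are $K_5$, $K_5$ minus one edge, $K_5$ minus two adjacent edges, and $K_5$ minus two nonadjacent edges, and in each case exhibiting $W_4$ (as a subgraph, or after a single edge deletion) is immediate — e.g. $W_4$ itself is $K_5$ minus a matching of size two (the two ``missing'' rim-to-rim chords). Alternatively, and perhaps more in the spirit of the authors' ``quick proof'', one can avoid enumerating $5$-vertex graphs entirely by running the induction purely on Tutte's sequence: start from $K_4$, and note that the very first operation applied must be a vertex split (an edge addition to $K_4$ does nothing since $K_4$ is complete), and splitting any vertex of $K_4$ produces a graph on $5$ vertices that is easily seen to contain $W_4$ as a minor; thereafter every further operation only adds edges or splits vertices, both of which preserve having a $W_4$ minor. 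Either way the argument is a short induction on the Tutte construction length, with the only real content being the one-line verification for the first split of $K_4$.
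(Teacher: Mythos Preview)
Your approach via Tutte's wheel theorem is the same as the paper's, and your observations that $W_n$ has a $W_4$ minor for $n\ge 4$ and that both Tutte operations preserve minors are fine. The gap is in the final step, and it is exactly the one-line observation the paper uses that you miss.

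You write that ``the very first operation applied must be a vertex split'' of $K_4$, and then analyse the resulting $5$-vertex graph. But reread the definition of splitting just above Theorem~\ref{thm:tutte}: it requires $\deg_G(v)\ge 4$. Since $K_4$ is cubic, \emph{no} vertex of $K_4$ can be split. And since $K_4$ is already complete, no edge can be added either. Hence the Tutte construction sequence starting at $W_3=K_4$ has length zero, and $G=K_4$. That is the whole proof in the paper. Your enumeration of $3$-connected graphs on $5$ vertices is therefore unnecessary (and slightly off: $K_5$ minus two adjacent edges has a degree-$2$ vertex, so it is not $3$-connected), and your ``approach 1'' inherits the same flaw, since its justification that the sequence must pass through a $5$-vertex graph again appeals to a vertex split of $K_4$.

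In short: you have the right global strategy, but the argument is only completed by noticing that $K_4$ is a dead end for both Tutte operations, not by analysing what a (nonexistent) first step would produce.
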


\begin{proof}
 Let $G$ be a $3$-connected graph with no $W_4$ minor.  
By Tutte's wheel theorem, $G$ is obtained from some $W_n$ by adding edges and splitting vertices.  Since $G$ has no $W_4$ minor, we must have $n=3$. 
If $G \neq W_3$, then we get a contradiction, since there is no way to add an edge to $W_3$ and stay simple, and there is no way to split a vertex ($W_3$ is cubic).  Thus, $G=W_3=K_4$, as required. 
\end{proof}

We also need the following two technical lemmas.

\begin{lemma} \label{rootedk4}
Let $G$ be a $2$-connected graph and $u$ and $v$ be distinct vertices of $G$.  If $G$ has a $K_4$ minor, then $G$ has a $K_4$ minor $K$
where $u$ and $v$ are contracted to distinct vertices of $K$.  
\end{lemma}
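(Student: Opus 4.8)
The plan is to argue by induction on $|V(G)| + |E(G)|$, with base case handled directly, and to reduce the general case to the $3$-connected case via a block/ear-type analysis. First I would dispose of some easy reductions: if $G$ has a vertex of degree $\le 1$ we may delete it (it cannot be $u$ or $v$ by $2$-connectivity, and deleting it preserves both $2$-connectivity and the $K_4$ minor whenever the minor survives — if it doesn't survive we never needed the vertex); and if $G$ has a degree-$2$ vertex $w \notin \{u,v\}$ we may suppress it, since suppressing preserves $2$-connectivity, preserves the existence of a $K_4$ minor, and a $K_4$ minor in the suppressed graph lifts to one in $G$ with the same branch sets for $u$ and $v$. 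Thus we may assume every vertex other than possibly $u,v$ has degree $\ge 3$, and handle the degree-$2$ case at $u$ or $v$ separately (there the two neighbours of $u$ give two internally disjoint $u$–$v$ paths-of-length-considerations; one routes around).

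Next, if $G$ is $3$-connected the statement is essentially free: take any $K_4$ minor; it has four branch sets; $u$ lies in one of them, say $B_1$, and $v$ lies in some $B_j$. If $j \ne 1$ we are done. If $v \in B_1$ as well, we want to re-route: since $G$ is $3$-connected, there are three internally disjoint paths from $u$ to the rest of the structure, and more to the point we can find a $K_4$ minor in which $u$ and $v$ are separated — concretely, contract a $u$–$v$ path $P$ inside $B_1$ to obtain $K_4$ with $u,v$ in the same branch set, then instead split $B_1$: pick $u$ and $v$ as the two "ends" of $B_1$ and push part of $B_1$ together with one of the other branch sets. The cleanest implementation: by Menger there are three internally disjoint paths from $\{u\}$ to $V(G) \setminus \{u\}$ landing in distinct components after deleting $u$... — rather, use that $G/\!\!/$ (contract nothing) is $3$-connected to invoke that a $3$-connected graph on $\ge 4$ vertices has, for any two vertices $u,v$, a spanning structure giving $K_4$ with $u,v$ in different branch sets; this can be seen from Tutte's wheel theorem (Theorem \ref{thm:tutte}) by tracking $u$ and $v$ through the construction from a wheel, where the claim is transparent for wheels.

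For the non-$3$-connected case, take a $2$-separation $(A,B)$ with $A \cap B = \{x,y\}$. The $K_4$ minor lives (up to the standard clique-sum decomposition of $K_4$-minor-existence) essentially in one side augmented by the virtual edge $xy$: that is, either $G[A] + xy$ or $G[B] + xy$ has a $K_4$ minor. Track where $u$ and $v$ fall relative to $(A,B)$. If both are in the same side, say $A$, and that side carries the minor, apply induction to $(G[A]+xy, u, v)$ — note $G[A]+xy$ is $2$-connected and smaller — to get the minor with $u,v$ separated, then expand $xy$ back through a $B$-path to realize it in $G$. If $u \in A \setminus B$ and $v \in B \setminus A$, then any $K_4$ minor of, say, $G[A]+xy$ does not see $v$ at all; here I would instead contract all of $B$ onto the edge $xy$ (legitimate since $G[B]$ is connected, even $2$-connected through $x,y$) so that $v$ becomes identified with a vertex on the $x$–$y$ side, then apply induction to the resulting graph with root pair $(u,v')$ where $v'$ is the image of $v$; but one must be careful that $v'$ stays distinct from $u$, which holds since $u \in A\setminus B$. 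If $u = x$ (a cut-pair vertex), the argument is similar and slightly easier since $x$ survives into both sides.

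The main obstacle I expect is the re-routing in the hard sub-case of the $3$-connected argument — ensuring that when $u$ and $v$ are forced into the same branch set of a given $K_4$ minor, $3$-connectivity genuinely lets us produce a \emph{different} $K_4$ minor separating them, rather than just observing it's plausible. The Tutte-wheel bookkeeping is the safest route but is somewhat laborious; an alternative is a direct Menger argument: delete $u$, use that $G-u$ is $2$-connected (so $v$ lies on a cycle), find three branch vertices for a $K_4$ minor avoiding... — this needs care about whether $v$ can be isolated into its own branch set. I would present the Tutte-wheel version as the cleanest, since Theorem \ref{thm:tutte} is already available and the wheel case is immediate.
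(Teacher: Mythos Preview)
Your plan is workable in principle but vastly overshoots the target, and as written it has real gaps. In the $2$-separation analysis you only treat the case where $u,v$ lie on the side that carries the $K_4$ minor; you never handle the situation where $u,v\in A$ but the $K_4$ minor lives in $G[B]+xy$. There you would need to contract $G[A]$ down to the virtual edge $xy$ while sending $u$ and $v$ to \emph{distinct} endpoints, i.e.\ to partition $A$ into two connected pieces separating $u$ from $v$ and containing $x,y$ respectively --- this is not automatic and needs its own argument. In the $3$-connected case you yourself flag the re-routing step as the ``main obstacle'' and do not actually carry it out; the Tutte-wheel bookkeeping you propose (tracking two specified vertices through an unknown sequence of edge-additions and vertex-splits) is genuinely unpleasant.

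The paper's proof avoids all of this with a single observation you are missing: because $K_4$ is cubic, a $K_4$ minor is automatically a $K_4$ \emph{subdivision}. So take a subgraph $H\subseteq G$ that is a subdivision of $K_4$; by $2$-connectivity and Menger there are two vertex-disjoint paths from $\{u,v\}$ to $V(H)$; contract them to assume $u,v\in V(H)$. Now $u$ and $v$ each sit on some subdivided edge of $H$ (or are branch vertices), and one checks in one line that they can always be contracted to distinct branch vertices. No induction, no $3$-connectivity reduction, no Tutte. The subdivision trick is the whole idea; once you see it, the lemma is four sentences.
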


\begin{proof}
Let $u$ and $v$ be distinct vertices of $G$.  Since $G$ has a $K_4$ minor and $K_4$ is cubic, $G$ also has a subgraph $H$ which is a subdivision of $K_4$.  By Menger's theorem, there are two disjoint paths from $\{u,v\}$ to $V(H)$.  By contracting these paths onto $V(H)$, we may assume that $u,v \in V(H)$.  But now in $H$ we can contract $u$ and $v$ onto distinct branch vertices of $K_4$.    
\end{proof}

We let $K_4-e$ denote the graph obtained from $K_4$ by removing an edge $e$.

\begin{lemma} \label{k4-e}
Let $G$ be a  $2$-connected graph with distinct vertices $u$ and $v$ such that $\degree (w) \geq 3$ for all $w \in V(G) \setminus \{u,v\}$.  Then $G$ has a $K_4-e$ minor where $u$ and $v$ are contracted to the endpoints  of $e$.  
\end{lemma}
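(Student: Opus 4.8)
The plan is to reduce to the previous lemma by adding the edge $uv$ and then cleaning up low-degree vertices. First I would form the graph $G^+ := G + uv$ (if $uv \in E(G)$ already, set $G^+ := G$). This graph is still $2$-connected, and now \emph{every} vertex outside $\{u,v\}$ has degree at least $3$, while $u$ and $v$ have degree at least $2$ (by $2$-connectivity) plus the new edge to each other. The goal is to show $G^+$ has a $K_4$ minor in which $u$ and $v$ survive as distinct branch vertices; deleting the edge $uv$ from such a $K_4$ minor then exhibits the desired $K_4-e$ minor of $G$ with $u,v$ at the ends of $e$. By Lemma~\ref{rootedk4}, it suffices to prove that $G^+$ itself has a $K_4$ minor.

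Next I would argue that $G^+$ has a $K_4$ minor. Suppose not. A connected graph with no $K_4$ minor is series-parallel, equivalently has tree-width at most $2$, and in particular has a vertex of degree at most $2$. Repeatedly suppress degree-$2$ vertices and delete degree-$1$ vertices; the class of graphs with no $K_4$ minor is minor-closed, so this process stays within the class, and since $G^+$ is $2$-connected and has no isolated vertices we keep suppressing degree-$2$ vertices until we reach either a single vertex, a single edge, or a multigraph on two vertices. But every vertex of $G^+$ other than $u,v$ has degree at least $3$, so such a vertex can never be suppressed or deleted; hence if $G^+$ has any vertex outside $\{u,v\}$ we reach a contradiction. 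The remaining case is $V(G) = \{u,v\}$, which cannot satisfy the hypotheses of a lemma asserting a $K_4-e$ minor on four branch sets — so I would simply assume $|V(G)| \geq 3$ (or $\geq 4$), which is implicit, or handle it by noting $G^+$ on $\geq 3$ vertices that is $2$-connected with all internal vertices of degree $\geq 3$ must contain a branch vertex and hence a subdivision of a graph with minimum degree $3$, forcing a $K_4$ minor by a standard argument (a $2$-connected graph with minimum degree $\geq 3$ has a $K_4$ minor).

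Cleaner variant I'd actually write: suppress all degree-$2$ vertices of $G^+$ (which can only be $u$ and $v$) and delete nothing else; the result $G'$ is a $2$-connected multigraph with minimum degree at least $3$, and every such graph has a $K_4$ minor. Then $K_4 \preceq G' \preceq G^+$, and moreover suppressing $u$ or $v$ only replaces them by a path through their former neighbors, so a $K_4$ minor of $G'$ pulls back to one of $G^+$; if $u$ (or $v$) was suppressed it lies on a subdivided edge and can be uncontracted into the appropriate branch set or onto the edge, after which I invoke Lemma~\ref{rootedk4} to route $u$ and $v$ to distinct branch vertices. Finally delete $uv$.

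The main obstacle I expect is the bookkeeping in the last step: after suppressing $u$ and/or $v$ and finding a $K_4$ minor in the reduced graph, I must make sure $u$ and $v$ can be recovered as \emph{distinct} branch vertices of a $K_4$ (not absorbed into the same branch set, and not both needed on the same subdivided edge). The hypothesis that all \emph{other} vertices have degree $\geq 3$ is exactly what guarantees $u,v$ are the only obstructions, and $2$-connectivity (via Menger, as in Lemma~\ref{rootedk4}) is what lets me reroute; combining these carefully is the crux, but it is routine once set up. Everything else — series-parallel structure, minimum-degree-$3$ forcing a $K_4$ minor — is standard.
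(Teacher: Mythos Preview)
Your approach is essentially the paper's: form $G + uv$, show it has a $K_4$ minor, invoke Lemma~\ref{rootedk4} to place $u$ and $v$ in distinct branch sets, then delete $uv$. The paper, however, does this in two lines. The observation you state but then fail to exploit is that in $G+uv$ the vertices $u$ and $v$ \emph{already} have degree at least $3$: they have degree $\geq 2$ in $G$ by $2$-connectivity, and the new edge $uv$ raises this to $\geq 3$. Hence $G+uv$ has minimum degree $\geq 3$ outright, so it has a $K_4$ minor, and Lemma~\ref{rootedk4} finishes. All of your series-parallel discussion, suppression of degree-$2$ vertices, and the ``bookkeeping obstacle'' of pulling $u,v$ back from a reduced graph simply evaporate once you notice this. (In the edge case $uv \in E(G)$ one can either read $G+uv$ as a multigraph or observe that a degree-$2$ vertex among $u,v$ can be suppressed without affecting the conclusion, since in $K_4-e$ the endpoints of $e$ have degree $2$ anyway.)
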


\begin{proof}
Note that $G+uv$ has a $K_4$ minor since it has minimum degree $3$.  Thus, the result follows by applying Lemma \ref{rootedk4} to $G+uv$.  
\end{proof}

 Note that for all $p \in [1, \infty]$ and $m \in \mathbb{N}$, the property $f_p(G) \leq m$ is closed under $0$- and $1$-sums.  However, the graph $K_4 +_e K_4$ shows that the property $f_{\infty}(G) \leq 2$ is not closed under taking $2$-sums.  

We are now ready to prove our main result.

\begin{reptheorem}{main}
The excluded minors for $f_\infty (G) \leq 2$ are $W_4$  and $K_4 +_e K_4$. 
\end{reptheorem}

\begin{proof}
Let $G$ be a minor-minimal graph with $f_\infty(G) \geq 3$. By minimality and the preceding discussion, $G$ is $2$-connected.  By Lemmas \ref{degree2} and  \ref{subdivision} we may assume that $G$ has minimum degree~$3$.  By Lemmas \ref{w4excluded} and \ref{k4plusk4} we may assume that $G$ does not have a $W_4$ or $K_4 +_e K_4$ minor.  If $G$ is $3$-connected, then by Theorem  \ref{nowheel}, $G=K_4$, which is a contradiction since $f_\infty (K_4)=2$.  Thus, $G=G_1 +_f G_2$ or $G=G_1 \oplus_f G_2$ for some graphs $G_1$ and $G_2$ with $f:=ab \in E(G_1) \cap E(G_2)$ and $|E(G_1)|, |E(G_2)| > 1$.
Since $f \in E(G_1) \cap E(G_2)$ and $G$ is 2-connected it follows that $G_1$ and $G_2$ are both $2$-connected. 
By Lemma \ref{k4-e}, $G_1$ has a $K_4-e$ minor where $a$ and $b$ are contracted to the endpoints of $e$ and $G_2$ has a $K_4-e$ minor where $a$ and $b$ are contracted to the endpoints of $e$.  
Combining these  two minors we get a $K_4 +_f K_4$ minor in $G$, which is a contradiction.  
\end{proof}

Finally, we prove Corollary~\ref{cor:main}. 

\begin{repcorollary}{cor:main}
The excluded minors for $f_1 (G) \leq 2$ are $W_4$  and $K_4 +_e K_4$. 
\end{repcorollary}

\begin{proof}
Note that the map $\phi: \mathbb{R}^2 \to \mathbb{R}^2$ given by $(x,y) \to (\frac{x-y}{2},\frac{x+y}{2})$ is an 
isometry between the metric spaces $\ell_{\infty}^2$ and $\ell_{1}^2$. 
Thus for every graph $G$ and distance function $d$ on $G$, $(G,d)$ is realizable in $\ell_{\infty}^2$ if and only if it is realizable in $\ell_{1}^2$. Therefore, $f_{\infty}(G) \leqslant 2$ implies $f_1(G) \leqslant 2$.

Moreover, it follows from the equivalence between $\ell_1$-embeddability and membership in the cut cone~\cite{Avis77} and Seymour's linear description of the cut cone of $K_5$-minor free graphs~\cite{Seymour81} that \emph{every} distance function $d$ on a graph $G$ can be realized in some $\ell_{1}^m$ if $G$ is $K_5$-minor free. Hence for all $K_5$-minor free graphs $G$, we have $f_{\infty}(G) \leqslant 2$ if and only if $f_{1}(G) \leqslant 2$.

We claim that in fact $f_{\infty}(G) \leqslant 2$ if and only if $f_{1}(G) \leqslant 2$ for \emph{all} graphs $G$. Indeed, otherwise there would exist a graph $G$ such that $f_{\infty}(G) > 2$ and $f_{1}(G) \leq 2$. Then $G$ would have a $K_5$ minor, and thus 
$f_{1}(G) \geqslant f_{1}(K_5) \geqslant 3$ (the last inequality is proved in~\cite{W86}), a contradiction. The result follows.
\end{proof}

\section{The example and some open problems} \label{sec:openproblems}

A \emph{tree-decomposition} of a graph $G$ is a pair $(T, \mathcal{B})$
where $T$ is a tree and $\mathcal{B}:=\{B_t \mid t \in V(T)\}$ is a collection of subsets of vertices of $G$ satisfying:
\begin{itemize}
\item $G= \bigcup_{t \in V(T)} G[B_t]$, and
\item for each $v \in V(G)$, the set of all $w \in V(T)$ such that $v \in B_w$ induces a connected subtree of $T$.  
\end{itemize}
The \emph{width} of $(T, \mathcal{B})$ is $\max \{|B_t|-1 \mid t \in V(T)\}$.  The \emph{tree-width} of $G$ is the minimum width taken over all tree-decompositions of $G$. The \emph{path-width} of $G$ is defined analogously, except we insist that $T$ is a path instead of an arbitrary tree.  

Fix any tree $T$ with at least two vertices. Let $V^+ := \{v^+ \mid v \in V(T)\}$ and $V^- := \{v^- \mid v \in V(T)\}$ be two disjoint copies of $V(T)$. We construct a planar graph $T \circ K_4$ from $T$ by replacing each vertex $v$ of $T$ by a pair of vertices $v^+, v^-$ in $T \circ K_4$ and each edge $vw$ of $T$ by the $4$-clique $\{v^+, v^-, w^+, w^-\}$ in $T \circ K_4$.  Formally, $V(T \circ K_4) = \{v^+ \mid v \in V(T)\} \cup \{v^- \mid v \in V(T)\}$ and $E(T \circ K_4) = \{v^+ v^- \mid v \in V(T)\} \cup \{v^+w^-, v^+w^+, v^-w^+, v^-w^-  \mid vw \in E(T)\}$. We now prove the following strengthened form of Theorem~\ref{thm:example}.

\begin{reptheorem}{thm:example}
For every tree $T$ with at least two vertices, $T \circ K_4$ is planar with tree-width $3$ and $f_\infty(T \circ K_4) \geq |V(T)|$. 
\end{reptheorem}

\begin{proof}
Clearly, $T \circ K_4$ is planar since $K_4$ is planar and planarity is closed under taking $2$-sums.  It is also easy to see that $T \circ K_4$ has tree-width $3$.  For the last part, we order the edges of $T$ arbitrarily, and define a function $d : E(T \circ K_4) \to \R_{\geq 0}$ by letting $d_{v^+v^-} := 1$ for $v \in V(T)$, and $d_{v^+w^+} = d_{v^-w^-} := 2^{-i}$, $d_{v^+w^-} = d_{v^-w^+} := 1 - 2^{-i}$ for the $i$th edge $vw \in E(T)$.

\begin{claim}
The function $d : E(T \circ K_4) \to \R_{\geq 0}$ is a distance function on $T \circ K_4$.
\end{claim}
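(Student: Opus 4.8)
The plan is to verify directly that for every edge $xy$ of $T \circ K_4$, every path $P$ from $x$ to $y$ in $T \circ K_4$ has length at least $d_{xy}$. Since $T \circ K_4$ is built by gluing $4$-cliques along the edges $v^+v^-$, a convenient first observation is that any path $P$ between two vertices either stays inside a single $4$-clique $\{v^+,v^-,w^+,w^-\}$, or it must pass through one of the ``separator'' edges $v^+v^-$ corresponding to vertices $v$ of $T$; more precisely, the vertices $\{v^+,v^-\}$ form a $2$-separator whenever $v$ is a non-leaf of $T$. So I would argue by induction on the number of cliques that $P$ meets, reducing to the case where $P$ lies in a single $4$-clique together with the observation that distances behave additively across the separators $\{v^+, v^-\}$.

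\textbf{Single-clique case.} Fix an edge $vw$ of $T$, the $i$th in the ordering, and consider the $4$-clique on $\{v^+, v^-, w^+, w^-\}$ with the six prescribed lengths: $d_{v^+v^-} = d_{w^+w^-} = 1$ (the latter from some other edge, but it is also $1$), $d_{v^+w^+} = d_{v^-w^-} = 2^{-i}$, and $d_{v^+w^-} = d_{v^-w^+} = 1 - 2^{-i}$. I would check the triangle inequality on all four triangles of this $K_4$, which suffices for a complete graph to be a metric (hence a fortiori a distance function when restricted to the clique). The relevant inequalities are $2^{-i} + (1-2^{-i}) \geq 1$ (equality, the ``short$+$long $=$ separator'' relation), $1 + 2^{-i} \geq 1 - 2^{-i}$, and $1 + (1-2^{-i}) \geq 2^{-i}$, all of which hold for $i \geq 1$. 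Note $d_{v^+v^-}=1$ is itself realized as a shortest path since $2^{-i} + (1-2^{-i}) = 1$, so the direct edge $v^+v^-$ ties with the two-step path through $w^+$ or $w^-$; this is fine since a distance function only requires the edge to be \emph{a} shortest path.

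\textbf{Global case.} For an arbitrary path $P$ from $x$ to $y$, I would use the block/clique structure: writing $P$ as a concatenation of subpaths, each lying in one $4$-clique and joining two of that clique's contact vertices, the total length of $P$ is the sum of the lengths of these pieces, each of which is at least the within-clique distance between its endpoints by the single-clique analysis; then a telescoping/triangle-inequality argument across the chain of cliques (using that consecutive cliques share a separator pair $\{v^+,v^-\}$, and that $d$ restricted to that pair is just the single value $d_{v^+v^-}=1$) shows the sum is at least $d_{xy}$. Concretely, if $x$ and $y$ are both in one clique this is the previous paragraph; if not, $P$ crosses some separator $\{v^+,v^-\}$, and one applies induction on either side together with the triangle inequality in whatever clique contains the edge $xy$.

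\textbf{Main obstacle.} I expect the only real subtlety is bookkeeping in the global case: making sure that when $P$ re-enters and leaves a clique multiple times, or passes through a separator vertex rather than using the separator edge, the length estimate still goes through. This is handled by the fact that the two separator vertices $v^+, v^-$ are at distance exactly $1 = d_{v^+v^-}$, so replacing any excursion between them by the single edge $v^+v^-$ never increases length; after all such replacements $P$ becomes a path that crosses each separator at most once and one is reduced to summing within-clique distances. The numerical inequalities themselves are trivial, so the write-up should be short.
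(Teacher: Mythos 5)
Your proposal is correct in substance, but it proves the claim by a genuinely different route than the paper. The paper argues globally: it observes that every edge of the cut $\delta(V^+)$ has length at least $\tfrac12$, so a path avoiding the edges $v^+v^-$ and crossing the cut twice already has length at least $1$; the remaining case (exactly one crossing $v^+w^-$) is handled using that each side induces a copy of the tree $T$, forcing the path to contain $v^+w^+$ or $v^-w^-$ and hence to have length at least $2^{-i}+(1-2^{-i})=1$. You instead exploit the clique-tree structure of $T\circ K_4$: verify the triangle inequalities inside each $4$-clique (which, for a complete graph, does imply the distance-function property there), and then glue across the $2$-separators $\{v^+,v^-\}$ by replacing any excursion through a branch by the edge $v^+v^-$, reducing every path to one inside the clique containing the edge in question. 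This works, and it effectively establishes a more general fact — that the distance-function property is preserved when gluing two pieces along a shared edge — whereas the paper's argument is shorter but tied to the specific weights. One point to make explicit in a full write-up: the statement that any $v^+$--$v^-$ excursion has length at least $1=d_{v^+v^-}$ is itself an instance of the claim being proved, so it must be supplied by the induction hypothesis applied to the branch $T'\circ K_4$ beyond the separator (e.g.\ induct by removing a leaf clique, and state the hypothesis for an arbitrary injective assignment of indices $i\geq 1$ to the tree edges, since a subtree inherits only a subset of the indices $1,\dots,|E(T)|$); as phrased, ``the two separator vertices are at distance exactly $1$'' would otherwise be circular. Also, since $xy$ is always an edge, $x$ and $y$ always lie in a common clique, so your case distinction collapses to the excursion-replacement step followed by the single-clique check.
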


\begin{proof}
We have to check that $d(P) \geqslant d_e$ for all edges $e$ and all paths $P$ between the endpoints of $e$, where $d(P) := \sum_{f \in E(P)} d_f$. Clearly, the inequality is satisfied if $P$ contains the edge $e$. Similarly, if $P$ contains the edge $v^+v^-$ for some $v \in V(T)$ then $d(P) \geqslant d_{v^+v^-} = 1 \geqslant d_e$. Thus we may assume that $P$ is a path in $T \circ K_4 - (\{e\} \cup \{v^+v^- \mid v \in V(T)\})$. 

Every edge $f$ in the cut $\delta(V^+)$ has $d_f \geqslant 1-2^{-1} = \frac{1}{2}$. Hence, if $P$ contains at least two edges in the cut $\delta(V^+)$ then $d(P) \geqslant \frac{1}{2} + \frac{1}{2} = 1 \geqslant d_e$. So we may further assume that $P$ contains at most one edge in $\delta(V^+)$. 

Since $P$ does not contain the edge $e$, and $T \circ K_4[V^+]$ and $T \circ K_4[V^-]$ are both isomorphic to the tree $T$, the path $P$ cannot be completely contained in either of these induced subgraphs. Thus $P$ crosses $\delta(V^+)$ exactly once, and $e=u^+z^-$ for some $u^+$ and $z^-$. Let $f = v^+w^-$ denote the unique edge of $P$ in $\delta(V^+)$, where $vw \in E(T)$. Then $P$ consists of a path in $T \circ K_4[V^+]$ from $u^+$ to $v^+$, followed by the edge $v^+w^-$, followed by a path in $T \circ K_4[V^-]$ from $w^-$ to $z^-$. Thus $P$ contains $v^+ w^+$ or $v^- w^-$. Without loss of generality, $P$ contains $v^+ w^+$ and $d(P) \geqslant d_{v^+w^+} + d_{v^+w^-} = 2^{-i} + 1 - 2^{-i} = 1 \geq d_e$, where $i$ is the index of the edge $vw \in E(T)$.
\end{proof}

\begin{claim}
For all distinct $v, w \in V(T)$, no feasible set of $(T \circ K_4, d)$ can contain both $v^+v^-$ and $w^+w^-$.
\end{claim}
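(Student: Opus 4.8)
The plan is to argue by contradiction. Suppose some feasible set $F$ of $(T\circ K_4,d)$ contains both $v^+v^-$ and $w^+w^-$ for distinct $v,w\in V(T)$, and let $p$ be a potential on $D(T\circ K_4,d)$ making every edge of $F$ tight; such $p$ exists by the definition of a feasible orientation, together with the previous Claim that $d$ is a distance function. Then $|p(v^+)-p(v^-)|=|p(w^+)-p(w^-)|=1$. Since the length function on $D(T\circ K_4,d)$ is symmetric, $-p$ and $p+c$ are again potentials and preserve tightness, so after translating and possibly negating $p$ we may assume $p(v^+)=1$ and $p(v^-)=0$. The goal is then to show that $|p(w^+)-p(w^-)|<1$, which contradicts the tightness of $w^+w^-$ because $d_{w^+w^-}=1$.

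To do this I would use the two induced copies $T\circ K_4[V^+]\cong T$ and $T\circ K_4[V^-]\cong T$ and the ``crossing'' edges joining them. Let $v=u_0,u_1,\dots,u_k=w$ be the path in $T$ from $v$ to $w$ (so $k\ge 1$), and for $j\in\{1,\dots,k\}$ let $i_j$ be such that $u_{j-1}u_j$ is the $i_j$th edge of $T$; the $i_j$ are distinct positive integers. Set $\sigma:=\sum_{j=1}^{k}2^{-i_j}$; because the $i_j$ are distinct positive integers, $0<\sigma<\sum_{i\ge 1}2^{-i}=1$. Telescoping $|p(x^+)-p(y^+)|\le d_{x^+y^+}=2^{-i}$ (and the same with $-$ in place of $+$) along the path, for the $i$th tree-edge $xy$, gives $|p(v^+)-p(w^+)|\le\sigma$ and $|p(v^-)-p(w^-)|\le\sigma$, so $p(w^+)\ge 1-\sigma$ and $p(w^-)\le\sigma$. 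For the reverse estimates, for each $j$ consider the walk $u_0^+,\dots,u_{j-1}^+,u_j^-,\dots,u_k^-$, which switches copies exactly once, along the edge $u_{j-1}^+u_j^-$ of length $1-2^{-i_j}$; it yields $|p(v^+)-p(w^-)|\le(\sigma-2^{-i_j})+(1-2^{-i_j})=1+\sigma-2^{1-i_j}$, and the mirror walk yields $|p(v^-)-p(w^+)|\le 1+\sigma-2^{1-i_j}$. Picking $j$ with $i^*:=i_j$ smallest and writing $\delta:=2^{1-i^*}-\sigma$, these become $|p(v^+)-p(w^-)|\le 1-\delta$ and $|p(v^-)-p(w^+)|\le 1-\delta$, i.e.\ $p(w^-)\ge\delta$ and $p(w^+)\le 1-\delta$.

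The one point with genuine content is that $\delta>0$, and this is exactly where distinctness of the edge indices enters: the tail sum $\sigma-2^{-i^*}=\sum_{j:\,i_j>i^*}2^{-i_j}$ is a sum of finitely many distinct powers $2^{-m}$ with $m\ge i^*+1$, so $\sigma-2^{-i^*}<\sum_{m\ge i^*+1}2^{-m}=2^{-i^*}$, hence $\sigma<2^{1-i^*}$ and $\delta>0$ (note also $0<\delta<1$, since $2^{1-i^*}\le 1$ and $\sigma>0$). Combining the four inequalities, $1-2\sigma=(1-\sigma)-\sigma\le p(w^+)-p(w^-)\le(1-\delta)-\delta=1-2\delta$; since $0<\delta<1$ and $0<\sigma<1$, both ends of this chain lie strictly between $-1$ and $1$, so $|p(w^+)-p(w^-)|<1$, the desired contradiction. (If it happened that $\delta>\sigma$, the sandwich would be empty, which is itself a contradiction, so no case split is needed.) I expect the only real obstacle is recognizing that the crossing walks give a bound strictly below $1$, which is forced by the geometric decay of the edge weights; everything else is routine telescoping and bookkeeping.
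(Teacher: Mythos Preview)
Your proof is correct and follows essentially the same approach as the paper. Both arguments use the identical paths in $T\circ K_4$---the straight paths $u_0^\pm\cdots u_k^\pm$ giving length $\sigma<1$, and the once-crossing paths switching at the tree-edge of smallest index giving length $1-\delta<1$---and the key arithmetic $\sigma<2^{1-i^*}$ is the same; the only difference is that the paper exhibits these paths as witnesses to negative cycles for each orientation of $\{v^+v^-,w^+w^-\}$, whereas you phrase the same inequalities as bounds on a potential, which is simply the dual viewpoint via Theorem~\ref{thm:potential}.
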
 

\begin{proof} Let $e := v^+v^-$ and $f := w^+w^-$. There are only two possible feasible orientations of $\{e,f\}$ (up to reversing both edges).  Therefore, to prove the claim, it suffices to exhibit paths $P_1, P_2, Q_1, Q_2$ such that 
\begin{itemize}
    \item 
    $P_1$ has ends $v^+$ and $w^+$ and $P_2$ has ends $v^-$ and $w^-$, 
    \item
    $Q_1$ has ends $v^+$ and $w^-$ and $Q_2$ has ends $v^-$ and $w^+$,
    \item
    $d(P_1)+d(P_2) < d_e+d_f = 2$ and $d(Q_1)+d(Q_2) < d_e+d_f = 2$.
\end{itemize}

Consider the unique path $P = u_1 \cdots u_k$ in $T$ from $u_1 := v$ to $u_k := w$. 

We take $P_1 := u^+_1 \cdots u^+_k$ and $P_2 := u^-_1 \cdots u^-_k$. Then $d(P_1) = d(P_2)$ is a sum of distinct powers of two of the form $2^{-i}$ where $i \geqslant 1$ is an integer. Thus $d(P_1) = d(P_2) < \sum_{i=1}^\infty 2^{-i} = 1$ and in particular $d(P_1) + d(P_2) < 1 + 1 = 2$.

Pick $j$ in $\{1,\ldots,k-1\}$ such that in the ordering of $E(T)$, $u_ju_{j+1} \in E(T)$ is minimum. We take $Q_1 := u^+_1 \cdots u^+_j u^-_{j+1} \cdots u^-_k$ and $Q_2 := u^-_1 \cdots u^-_j u^+_{j+1} \cdots u^+_k$. Then 
$$
d(Q_1) = d(Q_2) < 1-2^{-i} + \sum_{\ell = i+1}^\infty 2^{-\ell} = 1 - 2^{-i} + 2^{-i} = 1\,.
$$
Thus $d(Q_1) + d(Q_2) < 1 + 1 = 2$, as required.
\end{proof}
Any realization of $(T\circ K_4,d)$ into $\ell_{\infty}^m$ implies a partition of the edges of $T\circ K_4$ into $m$ feasible sets. By the previous claim, no two of the edges of the form $v^+v^-$, where $v \in V(T)$ can be put in the same feasible set. Thus we have $f_{\infty}(T\circ K_4) \geqslant |V(T)|$.
\end{proof}

Note that by a classic result of Nash-Williams \cite{Nash-Williams64}, every planar graph can be partitioned into three forests.  Thus, Theorem~\ref{thm:example} shows that $f_\infty(G) - \Upsilon(G)$ can be arbitrarily large. Furthermore, by taking $T$ to be a path or a star in Theorem \ref{thm:example}, we see that $f_\infty$ is not bounded as a function of path-width or as a function of diameter.  

As promised, we finish the paper with a couple of open problems.  One natural question is to try to extend Theorem~\ref{main} to higher dimensions.

\begin{question}
What are the excluded minors for $f_{\infty}(G) \leq 3$?  
\end{question}

Let $P_4$ be a path with four vertices and $S_3$ be a star with three leaves.  By Theorem~\ref{thm:example}, $f_\infty(P_4 \circ K_4) \geq 4$  and $f_\infty(S_3 \circ K_4) \geq 4$.  Thus, $P_4 \circ K_4$ and $S_3 \circ K_4$ each contain an excluded minor for $f_\infty(G) \leq 3$.

Finally, it is also interesting to ask how the excluded minors for $f_p(G) \leq k$ change for $p \in [1, \infty]$.  Let $\mathcal{G}$ be the set of all finite graphs and define $\ex: [1, \infty] \times \mathbb{N} \to 2^{\mathcal{G}}$ by letting $\ex(p,k)$ be the set of excluded minors for $f_p(G) \leq k$.
Fix $k$ and define $p_1 \equiv_k p_2$ if $\ex(p_1, k)=\ex(p_2, k)$.  Note that $\equiv_k$ is an equivalence relation on $[1, \infty]$.  It may be possible to prove something about the structure of the equivalence classes of $\equiv_k$ without knowing the function $\ex(p,k)$.  For example, by the graph minor theorem, there are only countably many minor-closed properties.  Thus, some equivalence class of $\equiv_k$ is necessarily uncountable. 

\begin{question}
If $C$ is an equivalence class of $\equiv_k$ such that $|C|$ is uncountable, does $C$ necessarily contain an interval? 
\end{question}

\medskip
\noindent {\bf Acknowledgements.} 
We thank the two anonymous referees for their helpful comments and in particular for pointing out a gap in the proof of Corollary~\ref{cor:main} in a previous version of the paper.  
S.~Fiorini and T.~Huynh are supported in part by ERC grant \emph{FOREFRONT} (grant agreement no.\ 615640) funded by the European Research Council under the EU's 7th Framework Programme (FP7/2007-2013). S.~Fiorini also ackowledges support from \emph{ARC} grant AUWB-2012-12/17-ULB2 \emph{COPHYMA} funded by the French community of Belgium. A.~Varvitsiotis is supported in part by the Singapore National Research Foundation under NRF RF Award No. NRF-NRFF2013-13.

\bibliography{minors}

\begin{thebibliography}{10}

\bibitem{Avis77}
D.~Avis.
\newblock {\em Some Polyhedral Cones Related to Metric Spaces}.
\newblock PhD thesis, Stanford University, 1977.

\bibitem{B90}
K.~Ball.
\newblock Isometric embedding in $l_p$-spaces.
\newblock {\em European J. Combin.}, 11:305--311, 1990.

\bibitem{Barvinok95}
A.~I. Barvinok.
\newblock Problems of distance geometry and convex properties of quadratic
  maps.
\newblock {\em Discrete Comput. Geom.}, 13(2):189--202, 1995.

\bibitem{Belk}
M.~Belk.
\newblock Realizability of graphs in three dimensions.
\newblock {\em Disc. Comput. Geom.}, 37:139--162, 2007.

\bibitem{BC07}
M.~Belk and R.~Connelly.
\newblock Realizability of graphs.
\newblock {\em Disc. Comput. Geom.}, 37:125--137, 2007.

\bibitem{Bourgain85}
J.~Bourgain.
\newblock On {L}ipschitz embeddings of finite metric spaces in {H}ilbert space.
\newblock {\em Israel J. Math.}, 52(1-1):46--52, 1985.

\bibitem{DL97}
M.~Deza and M.~Laurent.
\newblock {\em Geometry of Cuts and Metrics}.
\newblock Springer, 1997.

\bibitem{F10}
M.~Fr\'echet.
\newblock Les dimensions d'un ensemble abstrait.
\newblock {\em Math. Ann.}, 68:145--168, 1910.

\bibitem{H78}
W.~Holsztynski.
\newblock $\mathbb{R}^n$ as a universal metric space.
\newblock {\em Notices of the Amer. Math. Soc.}, 25(3):A--367, 1978.

\bibitem{Kitson15}
D.~Kitson.
\newblock Finite and infinitesimal rigidity with polyhedral norms.
\newblock {\em Discrete Comput. Geom.}, 54(2):390--411, 2015.

\bibitem{Nash-Williams64}
C.~S.~A. Nash-Williams.
\newblock Decomposition of finite graphs into forests.
\newblock {\em J. London Math. Soc.}, 39:12, 1964.

\bibitem{RS04}
N.~Robertson and P.~Seymour.
\newblock Graph minors. {XX}. {W}agners conjecture.
\newblock {\em J. Comb. Theory Ser. B}, 92(2):325--357, 2004.

\bibitem{RS95}
N.~Robertson and P.~D. Seymour.
\newblock Graph minors. {XIII}. {T}he disjoint paths problem.
\newblock {\em J. Combin. Theory Ser. B}, 63(1):65--110, 1995.

\bibitem{RR97}
V.~R{\"o}dl and A.~Ruci{\'n}ski.
\newblock Bipartite coverings of graphs.
\newblock {\em Combin. Probab. Comput.}, 6(3):349--352, 1997.

\bibitem{Seymour81}
P.~D. Seymour.
\newblock Matroids and multicommodity flows.
\newblock {\em European J. Combin.}, 2(3):257--290, 1981.

\bibitem{SG10}
M.~Sitharam and H.~Gao.
\newblock Characterizing graphs with convex and connected {C}ayley
  configuration spaces.
\newblock {\em Discrete Comput. Geom.}, 43(3):594--625, 2010.

\bibitem{SW15}
M.~Sitharam and J.~Willoughby.
\newblock On flattenability of graphs.
\newblock arXiv:1503.01489.

\bibitem{tutte}
W.~T. Tutte.
\newblock A theory of 3-connected graphs.
\newblock {\em Indag. Math.}, 23(441-455), 1961.

\bibitem{W86}
H.~S. Witsenhausen.
\newblock Minimum dimension embedding of finite metric spaces.
\newblock {\em J. Comb. Theory Ser. A}, 42:184--199, 1986.

\end{thebibliography}
\bibliographystyle{abbrv}

\end{document}